\newtheorem{thm}{Theorem}[section]
\newtheorem{cor}[thm]{Corollary}
\newtheorem{lem}[thm]{Lemma}
\newtheorem{exm}[thm]{Example}
\newtheorem{prop}[thm]{Proposition}
\theoremstyle{definition}
\newtheorem{defn}[thm]{Definition}
\theoremstyle{remark}
\newtheorem{rem}[thm]{\bf Remark}
\numberwithin{equation}{section}
\begin{document}
\title[The singularity category via the stabilization]{The singularity category via the stabilization}
\author[Xiao-Wu Chen] {Xiao-Wu Chen}

\makeatletter
\@namedef{subjclassname@2020}{\textup{2020} Mathematics Subject Classification}
\makeatother

\date{\today}
\subjclass[2020]{18G65, 18G80, 13D02, 16S88}

\thanks{E-mail: xwchen$\symbol{64}$mail.ustc.edu.cn}
\keywords{singularity category, stable category, stabilization, singular equivalence, Leavitt ring}

\begin{abstract}
We give a detailed proof of the following fundamental result: the singularity category of a ring is triangle equivalent to the stabilization of its stable module category. The result yields singular equivalences between rings of  different nature. We use Leavitt rings to describe singularity categories of artinian rings. 
\end{abstract}

\maketitle

\dedicatory{}%
\commby{}%

\section{Introduction}

Let $R$ be a unital ring. The singularity category \cite{Buc, Orl} of $R$ detects its homological singularity in the following sense: the singularity category vanishes if $R$ has finite global dimension.  The study of singularity categories  is closely related to Cohen-Macaulay modules \cite{Buc, Hap91}, noncommutative algebraic geometry \cite{Orl2, Len} and homological mirror symmetry of Landau-Ginzburg models \cite{Orl, Ebel}. 

The singularity category  is originally called the \emph{stabilized derived category} in \cite{Buc}, as it describes  the stable homological features of $R$. It captures the asymptotic  behavior of the syzygy endofunctor $\Omega_R$ on the stable module category. This statement is made precise by the  fundamental result in \cite{Buc, KV}, which  states that the singularity category  is triangle equivalent to the stabilization \cite{Freyd, Hel68} of the stable module category by formally inverting $\Omega_R$; see also \cite{Bel}. 

The goal of the paper is twofold. The first is to give a detailed proof of the fundamental result above. The proof presented in \cite{Bel} seems leaving  out some technical details. The second is to report some applications. The fundamental result yields another proof of Buchweitz's theorem \cite{Buc}, which describes the singularity category of a Gorenstein ring using Gorenstein-projective  modules \cite{EJ}; compare \cite{CW}. We report recent work \cite{Kalck, Wei} on singular equivalences between rings of rather different nature, and a new description \cite{CW25} of the singularity category of an artinian ring using Leavitt rings \cite{CO, CWKW}.

The paper is structured as follows. In Section~2, we recall the stabilization of a looped category. In Section~3, we recall from \cite{KV, Bel} that the stabilization of a left triangulated category is naturally a triangulated category.  In Section~4, we give a detailed proof of the fundamental result mentioned above; see Theorem~\ref{thm:BKVB}. We give its applications in the last two sections. 

We mention that almost all the results are already contained in the literature. The only exception might be Theorem~\ref{thm:radical}, which describes the singularity category of  an artinian ring with radical square zero via a Leavitt ring. It is a slight generalization of a result in \cite{Smi} with a  different proof.  For quivers and stable module categories, we refer to \cite{ARS}. For triangulated categories, we refer to \cite{Ver, Hap}.

\section{The stabilization of a looped category}

In this section, we recall basic facts on the stabilization. Roughly speaking, the stabilization is a categorical construction of formally inverting an endofunctor, which is due to \cite[Section~2]{Freyd} and \cite[Chapter~I]{Hel68}. A similar idea is traced back to the well-known Spanier-Whitehead category \cite{SW} of pointed CW-complexes. We mention its connection to spectra \cite{Gra, Bel, Boo}.

By a \emph{looped category} \cite{BM, Bel}, we mean a pair $(\mathcal{C}, \Omega)$ consisting of a category $\mathcal{C}$ and an endofunctor $\Omega$ on $\mathcal{C}$. The looped category is called \emph{stable} if $\Omega$ is an autoequivalence on $\mathcal{C}$,  and called \emph{strictly stable} if $\Omega$ is an automorphism on $\mathcal{C}$.

Let $(\mathcal{C}, \Omega)$ and $(\mathcal{D}, \Delta)$ be two looped categories. A \emph{looped functor}
$$(F, \delta)\colon (\mathcal{C}, \Omega)\longrightarrow (\mathcal{D}, \Delta)$$
consists of a functor $F\colon \mathcal{C}\rightarrow \mathcal{D}$ and a natural isomorphism $\delta\colon F\Omega \rightarrow \Delta F$.  For such a looped functor $(F, \delta)$ and $n\geq 1$, we define inductively a natural isomorphism
\begin{align}\label{equ:delta}
\delta^{(n)} \colon  F\Omega^n \stackrel{\sim} \longrightarrow \Delta^n F
\end{align}
by $\delta^{(1)}=\delta$ and $\delta^{(n+1)}=\Delta^n \delta\circ \delta^{(n)}\Omega$. We set $\delta^{(0)}$ to be the identity transformation on $F$, where $\Omega^0$ and $\Delta^0$ are understood to be the corresponding identity endofunctors. A  looped functor $(F, \delta)$ is called \emph{strictly looped} if $F\Omega=\Delta F$ and $\delta$ is the identity transformation on $F\Omega$. In this situation, $(F, \delta)$ is abbreviated as $F$.

In what follows, we assume that  $(\mathcal{C}, \Omega)$ is a looped category. We define its \emph{stabilization} $\mathcal{S}=\mathcal{S}(\mathcal{C}, \Omega)$ as follows. The objects are given by pairs $(X, n)$, which consist of an object $X$ in $\mathcal{C}$ and an integer $n$. The Hom-set from $(X, n)$ to $(Y, m)$ is given by a colimit
$$\mathcal{S}((X, n), (Y, m))={\rm colim}\; \mathcal{C}(\Omega^{p-n}(X), \Omega^{p-m}(Y)),$$
where $p$ runs over all the integers satisfying $p\geq {\rm max}\{n, m\}$, and the structure map is induced by the endofunctor $\Omega$. 

For a morphism $f\in \mathcal{C}(\Omega^{p-n}(X), \Omega^{p-m}(Y))$, its image in $\mathcal{S}((X, n), (Y, m))$ will be denoted by 
$$\iota_p(f)\colon (X, n)\longrightarrow (Y, m).$$
We emphasize that the morphism $\iota_p(f)$ really  depends on $n$ and $m$.  By the very definition, we have
\begin{align}\label{equ:omega}
    \iota_p(f)=\iota_{p+k}(\Omega^k(f))
\end{align}
for any $k\geq 0$.

The composition in $\mathcal{S}$ is induced by the one in $\mathcal{C}$. To be more precise, we take any two morphisms $\iota_p(f)\colon (X, n)\rightarrow (Y, m)$ and  $\iota_q(g)\colon (Y, m)\rightarrow (Z, l)$. By (\ref{equ:omega}), we may always assume that $q\geq p$. The composition is defined  by
$$\iota_q(g)\circ \iota_p(f)= \iota_{q} (g\circ \Omega^{q-p}(f)).$$

The category $\mathcal{S}$ carries an automorphism $\Sigma$ defined by $\Sigma(X, n)=(X, n+1)$ on objects. It sends $\iota_p(f)\colon (X, n)\rightarrow (Y, m)$ to $\iota_{p+1}(f)\colon (X, n+1)\rightarrow (Y, m+1)$ on morphisms. This automorphsim $\Sigma$ is called the \emph{suspension functor} of $\mathcal{S}$.  Consequently, we have a strictly stable category $(\mathcal{S}, \Sigma^{-1})$.

We have a canonical functor $\mathbf{S}\colon \mathcal{C}\rightarrow \mathcal{S}$,  which sends $A$ to $(A, 0)$, and sends a morphism $f\colon A\rightarrow B$ to $\mathbf{S}(f)=\iota_0(f)\colon (A, 0)\rightarrow (B, 0)$.

\begin{lem}\label{lem:stab}
    Consider the morphism $\iota_p(f) \colon (X, n)\rightarrow (Y, m)$ above. The following statements hold.
    \begin{enumerate}
    \item  For each $s\geq 0$, the morphism $\iota_{n+s}({\rm Id}_{\Omega^s(X)})\colon (X, n)\rightarrow (\Omega^s(X), n+s)$ is an isomorphism.
    \item We have a commutative diagram in $\mathcal{S}$.
    \[\xymatrix{
    (X, n) \ar[d]_-{\iota_{p}({\rm Id}_{\Omega^{p-n}(X)})} \ar[rr]^-{\iota_p(f)} && (Y, m) \ar[d]^-{\iota_p({\rm Id}_{\Omega^{p-m}(Y)})}\\
    (\Omega^{p-n}(X), p) \ar[rr]^-{\Sigma^{p} \mathbf{S}(f)} && (\Omega^{p-m}(Y), p)
    }\]
    \end{enumerate}
\end{lem}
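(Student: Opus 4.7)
The plan is to verify both statements by unwinding the definitions of composition in $\mathcal{S}$, of the suspension $\Sigma$, and of equation (\ref{equ:omega}). For (1), I would exhibit the inverse explicitly: take the morphism $\iota_{n+s}(\mathrm{Id}_{\Omega^s(X)}) \colon (\Omega^s(X), n+s) \to (X, n)$, using the same identity element but now viewed in the Hom-colimit with source and target swapped; in both directions the representative sits in $\mathcal{C}(\Omega^s(X), \Omega^s(X))$ computed at level $p = n+s$. Composing the two morphisms in either order, the composition rule at $q = p = n+s$ produces $\iota_{n+s}(\mathrm{Id}_{\Omega^s(X)})$ viewed as a self-map of $(X,n)$ or of $(\Omega^s(X), n+s)$. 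The former is identified with $\iota_n(\mathrm{Id}_X)$ by (\ref{equ:omega}) applied with $k = s$, while the latter is the defining representative of the identity at level $p = n+s$; both are the respective identity morphisms, so $\iota_{n+s}(\mathrm{Id}_{\Omega^s(X)})$ is invertible.

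For (2), I would compute the two legs of the diagram and check they coincide. First note that $\Sigma^p \mathbf{S}(f) = \iota_p(f) \colon (\Omega^{p-n}(X), p) \to (\Omega^{p-m}(Y), p)$, directly from $\mathbf{S}(f) = \iota_0(f)$ together with the rule that $\Sigma$ raises the subscript of $\iota$ by one. Applying the composition formula at $q = p$, the upper-right leg is $\iota_p(\mathrm{Id}_{\Omega^{p-m}(Y)}) \circ \iota_p(f) = \iota_p(\mathrm{Id} \circ f) = \iota_p(f)$, and the lower-left leg is $\iota_p(f) \circ \iota_p(\mathrm{Id}_{\Omega^{p-n}(X)}) = \iota_p(f \circ \mathrm{Id}) = \iota_p(f)$. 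Both legs equal the morphism $\iota_p(f) \colon (X, n) \to (\Omega^{p-m}(Y), p)$, so the square commutes.

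The main thing to watch, rather than a genuine obstacle, is that the symbol $\iota_p(f)$ suppresses the data of source and target in the stabilization. One must confirm at each step that the same underlying element of $\mathcal{C}(\Omega^{p-n}(X), \Omega^{p-m}(Y))$ legitimately represents the three morphisms $(X,n) \to (Y,m)$, $(X,n) \to (\Omega^{p-m}(Y), p)$, and $(\Omega^{p-n}(X), p) \to (\Omega^{p-m}(Y), p)$ that appear in the diagram. Once this bookkeeping is in place, both items reduce to routine applications of the composition formula and (\ref{equ:omega}).
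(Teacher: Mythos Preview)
Your proposal is correct and follows exactly the same approach as the paper: for (1) you exhibit the inverse as $\iota_{n+s}(\mathrm{Id}_{\Omega^s(X)})\colon (\Omega^s(X), n+s)\to (X,n)$, and for (2) you use the identification $\Sigma^{p}\mathbf{S}(f)=\iota_p(f)$ and check the commutativity directly. The paper's proof is just the terse two-sentence version of what you have written out in detail.
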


\begin{proof}
    For (1), we mention that  the inverse of the given morphism $\iota_{n+s}({\rm Id}_{\Omega^s(X)})$ is given by $\iota_{n+s}({\rm Id}_{\Omega^s(X)})\colon (\Omega^s(X), n+s)\rightarrow (X, n)$. The commutativity of the diagram in (2) is trivial, by using $\Sigma^{p} \mathbf{S}(f)=\iota_p(f)$.
\end{proof}

The following remark on \emph{enlarging the second entries} makes the stabilization $\mathcal{S}$ more accessible.

\begin{rem}\label{rem:enlarge}
By the isomorphism in Lemma~\ref{lem:stab}(1), we may always enlarge the second entry of any object in $\mathcal{S}$. The commutative square above implies that up to isomorphism, any morphism in $\mathcal{S}$ is of the form $\Sigma^{p}\mathbf{S}(f)$ for some morphism $f$ in $\mathcal{C}$. For the same reason, any finite commutative diagram in $\mathcal{S}$ is isomorphic to the one, which is obtained by applying $\Sigma^p\mathbf{S}$ to a corresponding commutative diagram in $\mathcal{C}$ for a sufficiently large $p$. 
\end{rem}

For each object $X$ in $\mathcal{C}$, we have a natural isomorphism
$$\theta_X=\iota_0({\rm Id}_{\Omega(X)})\colon \mathbf{S}\Omega(X)=(\Omega(X), 0) \stackrel{\sim}\longrightarrow (X, -1)=\Sigma^{-1}\mathbf{S}(X).$$
In other words, we have a looped functor
$$(\mathbf{S}, \theta)\colon (\mathcal{C}, \Omega)\longrightarrow (\mathcal{S}, \Sigma^{-1}),$$
called the \emph{stabilization functor}. In a certain sense, it formally inverts the endofunctor $\Omega$ on $\mathcal{C}$.

The stabilization functor $(\mathbf{S}, \theta)$ enjoys the following universal property; see \cite[Proposition~2.1]{Freyd}  and \cite[Proposition~1.1]{Hel68}.

\begin{prop}\label{prop:univ}
Let $(F, \delta)\colon (\mathcal{C}, \Omega)\rightarrow (\mathcal{D}, \Delta)$ be a looped functor with $(\mathcal{D}, \Delta)$ a strictly stable category.  Then there is a unique functor $\tilde{F}\colon (\mathcal{S}, \Sigma^{-1})\rightarrow (\mathcal{D}, \Delta)$, which is strictly looped satisfying $F=\tilde{F}\mathbf{S}$ and $\delta=\tilde{F}\theta$.
\end{prop}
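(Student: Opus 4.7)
The plan is to derive the definition of $\tilde{F}$ from the three constraints, and then verify the resulting formula is a functor with the required properties. Uniqueness will fall out of the derivation.

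On objects, strict loopedness forces $\tilde{F}(X,n+1) = \Delta^{-1}\tilde{F}(X,n)$, and $F = \tilde{F}\mathbf{S}$ gives $\tilde{F}(X,0) = F(X)$, so
$$\tilde{F}(X,n) = \Delta^{-n} F(X), \qquad n\in\mathbb{Z}.$$
On morphisms, Lemma~\ref{lem:stab}(2) writes every $\iota_p(f)\colon (X,n)\to(Y,m)$ as the conjugate of $\Sigma^p\mathbf{S}(f)$ by the canonical isomorphisms of Lemma~\ref{lem:stab}(1). Iterating $\theta$ (suitably shifted by powers of $\Sigma$) and using $\tilde{F}(\theta_X)=\delta_X$ together with strict loopedness, one checks that $\tilde{F}(\iota_p({\rm Id}_{\Omega^{p-n}X}))$ is forced to be $\Delta^{-p}(\delta^{(p-n)}_X)^{-1}$. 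Composing yields the formula
$$\tilde{F}(\iota_p(f)) \;=\; \Delta^{-p}\bigl(\delta^{(p-m)}_Y \circ F(f) \circ (\delta^{(p-n)}_X)^{-1}\bigr),$$
which establishes uniqueness. It remains to show the formula gives a strictly looped functor satisfying the two stated identities.

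Well-definedness is the core calculation and the step I expect to be the main obstacle: the right-hand side must be invariant under the replacement $(p,f) \leadsto (p+1, \Omega f)$ that generates the colimit in~(\ref{equ:omega}). This reduces to combining the recursion~(\ref{equ:delta}), in its equivalent form $\delta^{(n+1)} = \Delta\delta^{(n)}\circ \delta\Omega^n$ (an easy induction from the original form), with the naturality of $\delta$ applied to $f$; the inner occurrence of $\delta_{\Omega^{p-n}X}$ and its inverse then cancel, and the outer $\Delta$ absorbs the extra $\Delta^{-1}$ out front. Functoriality is verified by first normalizing two composable morphisms to a common index via~(\ref{equ:omega}) and then using that $F$ is a functor together with invertibility of each $\delta^{(k)}$. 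Strict loopedness $\tilde{F}\Sigma^{-1} = \Delta\tilde{F}$ is immediate on both objects and morphisms from the formula; the identity $F = \tilde{F}\mathbf{S}$ follows by taking $p=0$ with $\delta^{(0)} = {\rm Id}$; and $\delta = \tilde{F}\theta$ follows by substituting $\theta_X = \iota_0({\rm Id}_{\Omega X})$ into the formula, where it reduces to $\delta^{(1)} = \delta$.
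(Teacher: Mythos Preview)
Your proposal is correct and follows exactly the paper's approach: the paper's proof simply records the formula $\tilde{F}(X,n)=\Delta^{-n}F(X)$ and $\tilde{F}(\iota_p(f))=\Delta^{-p}\bigl(\delta^{(p-m)}_Y\circ F(f)\circ(\delta^{(p-n)}_X)^{-1}\bigr)$ without further comment, and you have supplied the routine verifications (well-definedness under $(p,f)\leadsto(p+1,\Omega f)$, functoriality, the three constraints) that the paper leaves to the reader.
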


\begin{proof}
The functor $\tilde{F}$ sends $(X, n)$ to $\Delta^{-n}F(X)$. For any morphism $\iota_p(f)\colon  (X, n)\rightarrow (Y, m)$ in $\mathcal{S}$, we have
\begin{align*}
\tilde{F}(\iota_p(f))=\Delta^{-p}(\delta^{(p-m)}_Y \circ F(f)\circ (\delta^{(p-n)}_X)^{-1})\colon \Delta^{-n} F(X)\longrightarrow \Delta^{-m} F(Y).
\end{align*}
Here, we are using the natural isomorphisms  (\ref{equ:delta}). 
\end{proof}

The following useful results are due to  \cite[1.2~Proposition]{Tier}; compare  \cite[Proposition~3.4]{Bel}. 

\begin{prop}\label{prop:equiv}
Consider  the strictly looped functor $\tilde{F}\colon (\mathcal{S}, \Sigma^{-1})\rightarrow (\mathcal{D}, \Delta)$ above. Then the following statements hold.
\begin{enumerate}
\item  The functor $\tilde{F}$ is full if and only if for any objects $X$ and $Y$ in $\mathcal{C}$ and any  morphism $g\colon F(X)\rightarrow F(Y)$ in $\mathcal{D}$, there exist $i\geq 0$ and a morphism $f\colon \Omega^i(X)\rightarrow \Omega^i(Y)$ in $\mathcal{C}$ satisfying $\Delta^i(g)=\delta^{(i)}_Y\circ F(f)\circ (\delta^{(i)}_X)^{-1}$.
\item  The functor $\tilde{F}$ is faithful  if and only if for any two morphisms $f, f'\colon X\rightarrow Y$ in $\mathcal{C}$ with $F(f)=F(f')$, there exists $i\geq 0$  such that $\Omega^i(f)=\Omega^i(f')$.
\item  The functor $\tilde{F}$ is dense if and only if for any object $Z$ in $\mathcal{D}$, there exist $i\geq 0$ and an object $X$ in $\mathcal{C}$ satisfying $\Delta^i(Z)\simeq F(X)$.
\end{enumerate}
\end{prop}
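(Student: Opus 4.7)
The plan is to prove each of the three biconditionals separately using the explicit formula from Proposition~\ref{prop:univ}, namely $\tilde{F}(X,n) = \Delta^{-n} F(X)$ on objects and $\tilde{F}(\iota_p(f)) = \Delta^{-p}(\delta_Y^{(p-m)} \circ F(f) \circ (\delta_X^{(p-n)})^{-1})$ on morphisms, together with the normalization afforded by Lemma~\ref{lem:stab} and Remark~\ref{rem:enlarge}. For density (3), the ``if'' direction is immediate: an isomorphism $\Delta^i(Z) \simeq F(X)$ yields $Z \simeq \Delta^{-i} F(X) = \tilde{F}(X, i)$. For the converse, I would choose $(X, n)$ with $\tilde{F}(X, n) \simeq Z$; if $n < 0$, I replace $(X, n)$ by its canonically isomorphic copy $(\Omega^s X, n+s)$ from Lemma~\ref{lem:stab}(1) with $s$ chosen so that $n + s \geq 0$, and then take $i := n + s$ and $X' := \Omega^s X$.

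For fullness (1), the ``only if'' direction is obtained by viewing any $g \colon F(X) \to F(Y)$ as a morphism $\tilde{F}(X,0) \to \tilde{F}(Y,0)$: fullness supplies a preimage of the form $\iota_p(f)$ with $f \colon \Omega^p X \to \Omega^p Y$, and the defining formula for $\tilde{F}$ rearranges directly into the required identity with $i := p$. Conversely, given $\psi \colon \tilde{F}(X,n) \to \tilde{F}(Y,m)$, I would use Lemma~\ref{lem:stab}(1) to normalize the second entries of the source and target to a common non-negative integer $p$, apply $\Delta^p$ to obtain a morphism between the corresponding $F$-images in $\mathcal{D}$, invoke the hypothesis to produce $f$ and $i$, and then verify that the corresponding $\iota_{p+i}(f)$ is the desired preimage.

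For faithfulness (2), the ``only if'' direction follows from applying faithfulness to $\iota_0(f), \iota_0(f') \colon (X,0) \to (Y,0)$ and then unpacking the colimit definition of the Hom-sets in $\mathcal{S}$, which states exactly that the equality of colimit classes is the existence of some $\Omega^i$ identifying $f$ and $f'$. Conversely, given $\phi, \phi'$ with $\tilde{F}(\phi) = \tilde{F}(\phi')$, I represent them as $\iota_p(f), \iota_p(f')$ with a common index $p$; the invertibility of the $\delta^{(\cdot)}$'s collapses the equality to $F(f) = F(f')$, whereupon the hypothesis produces $i$ with $\Omega^i(f) = \Omega^i(f')$, which by (\ref{equ:omega}) forces $\iota_p(f) = \iota_p(f')$.

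The main obstacle I anticipate is the bookkeeping that coordinates the $\Delta$-shifts with the $\Omega$-shifts through the auxiliary natural isomorphisms $\delta^{(n)}$; in particular, one must check carefully that the normalizations via Lemma~\ref{lem:stab}(1) translate, under $\tilde{F}$, into precisely the canonical isomorphisms being used implicitly throughout, so that the reductions carried out in (1) and (3) genuinely recover the original statements rather than merely some shifted variant of them.
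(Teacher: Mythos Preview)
Your proposal is correct and follows essentially the same approach as the paper: the paper only spells out part (1), using Remark~\ref{rem:enlarge} to equalize the second entries and then invoking the explicit formula for $\tilde{F}$, exactly as you do. Your sketches for (2) and (3), which the paper omits, are also sound, and the bookkeeping concern you flag is handled precisely by the compatibility recorded in Lemma~\ref{lem:stab}(2).
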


\begin{proof}
We only prove (1). The ``only if" part is trivial, since the fullness implies that $g=\tilde{F}(\iota_i(f))$  for some $i\geq 0$ and some morphism $f$.  For  the ``if" part, we take two objects $(X, n)$ and $(Y, m)$ in $\mathcal{S}$, and any morphism $g'\colon \tilde{F}(X, n)\rightarrow \tilde{F}(Y, m)$. By Remark~\ref{rem:enlarge}, we may assume that $n=m$. There is a unique morphism $g\colon F(X) \rightarrow F(Y)$ satisfying $g'=\Delta^{-n}(g)$. By assumption, there exists a morphism $f\colon \Omega^i(X)\rightarrow \Omega^i(Y)$ in $\mathcal{C}$ satisfying $\Delta^i(g)=\delta^{(i)}_Y\circ F(f)\circ (\delta^{(i)}_X)^{-1}$. The morphism $\iota_{n+i}(f)\colon (X, n)\rightarrow (Y, n)$ in $\mathcal{S}$ is well defined. It is routine to verify that $\tilde{F}(\iota_{n+i}(f))=g'$.
\end{proof}

\section{The stabilization of a left triangulated category}

In this section, we recall the notion of a left triangulated category \cite{KV, BM} and its stabilization. Left triangulated categories are one-sided analogues to the well-known triangulated categories in the sense of \cite{Ver, DP}.  

The following definition is taken from \cite[Section~2]{BM}; compare \cite[Subsection~1.1]{KV}.

\begin{defn}
    A \emph{left triangulated category} is a triple $(\mathcal{C}, \Omega, \mathcal{E})$, where $\mathcal{C}$ is an additive category, $\Omega$ is an additive endofunctor on $\mathcal{C}$, and $\mathcal{E}$ is a chosen class of sequences $\Omega(Z) \stackrel{h}\rightarrow X \stackrel{g}\rightarrow Y \stackrel{f}\rightarrow Z$ in $\mathcal{C}$, which are called \emph{exact left-triangles}. These data are subject to the following axioms. 
    \begin{enumerate}
    \item[(LTR1)] The class $\mathcal{E}$ of  exact left-triangles are closed under isomorphisms. For each object $X$, the sequence $\Omega(0) \rightarrow X \stackrel{{\rm Id}_X} \rightarrow X \rightarrow 0$ belongs to $\mathcal{E}$. Each morphism $f\colon Y\rightarrow Z$ fits into an exact left-triangle  $\Omega(Z) \rightarrow X \rightarrow Y \stackrel{f}\rightarrow Z$. 
    \item[(LTR2)]  The \emph{rotation axiom}: for each member $\Omega(Z) \stackrel{h}\rightarrow X \stackrel{g}\rightarrow Y \stackrel{f}\rightarrow Z$ in $\mathcal{E}$, the rotated one $\Omega(Y) \stackrel{-\Omega(f)}\rightarrow \Omega(Z) \stackrel{h}\rightarrow X \stackrel{g}\rightarrow Y$ also belongs to $\mathcal{E}$.
\item[(LTR3)] The \emph{morphism axiom}: for any diagram with rows belonging to $\mathcal{E}$ and $a\circ f=f'\circ b$, 
\[\xymatrix{
\Omega(Z)\ar[d]_-{\Omega(a)} \ar[r]^-{h} & X \ar[r]^-g & Y\ar[d]^-{b} \ar[r]^-f & Z\ar[d]^-{a}\\
\Omega(Z') \ar[r]^-{h'} & X' \ar[r]^-{g'} & Y' \ar[r]^-{f'} & Z'
}\]
there exist a morphism $c\colon X\rightarrow X'$ making the diagram commute. 
\item[(LTR4)] The \emph{octahedral axiom}: for any composable pair of  morphisms $X\stackrel{v} \rightarrow Y \stackrel{u} \rightarrow Z$, there exists a commutative diagram whose  two lower rows and two middle columns belong to $\mathcal{E}$.
\[\xymatrix{
 & \Omega(C)\ar[d] \ar[r]^-{\Omega(c)} & \Omega(Y) \ar[d]^-{\omega}\\
 \Omega(Y)\ar[d]_-{\Omega(c)} \ar[r]^-{\omega} & A\ar@{.>}[d] \ar@{=}[r] & A\ar[d] \\
 \Omega(Z)\ar@{=}[d] \ar[r] &  B\ar@{.>}[d]  \ar[r] & X \ar[d]^-{v}\ar[r]^-{u\circ v} & Z\ar@{=}[d]\\
 \Omega(Z) \ar[r] &  C  \ar[r]^-{c} & Y \ar[r]^-{u} & Z\\
}\]
    \end{enumerate}
\end{defn}

\begin{rem}\label{rem:left}
(1) For the diagram in (LTR4), the existence of all solid arrows  follows from (LTR1) and (LTR3). The essential content of the octahedral axiom is  to require the existence of the two dotted arrows, which make the commutativity such that the resulting  sequence $\Omega(C) \rightarrow A \rightarrow B \rightarrow C$ belongs to $\mathcal{E}$.   

(2) Dually, one defines \emph{right triangulated categories}, which are called suspended categories in \cite{KV}. We menion the more subtle notion of a pre-triangulated category in \cite[Definition~4.9]{Bel01}; compare \cite[Definition~6.5.2]{Hov}.

(3) When the endofunctor $\Omega$ is an autoequivalence, that is, $(\mathcal{C}, \Omega)$ is stable, the left triangulated category $(\mathcal{C}, \Omega, \mathcal{E})$ gives rise to a  triangulated category in a natural manner. In practice, we identify a stable left triangulated category with a triangulated category. 
\end{rem}

In what follows,  $(\mathcal{C}, \Omega, \mathcal{E})$ is a fixed left triangulated category.  Let $(\mathcal{D}, \Delta, \mathcal{E}')$ be another left triangulated category. A \emph{triangle functor} 
$$(F, \delta)\colon (\mathcal{C}, \Omega, \mathcal{E})\longrightarrow (\mathcal{D}, \Delta, \mathcal{E}')$$
is a looped functor such that for each member  $\Omega(Z) \stackrel{h}\rightarrow X \stackrel{g}\rightarrow Y \stackrel{f}\rightarrow Z$ in $\mathcal{E}$, the resulting sequence
$$ \Delta F(Z) \xrightarrow{F(h)\circ \delta^{-1}_Z}  F(X) \stackrel{F(g)}\longrightarrow F(Y) \stackrel{F(f)}\longrightarrow F(Z)$$
belongs to $\mathcal{E}'$. 

Consider the stabilization $\mathcal{S}=\mathcal{S}(C, \Omega)$ and the suspension functor  $\Sigma$ on it. For each member  $\Omega(Z) \stackrel{h}\rightarrow X \stackrel{g}\rightarrow Y \stackrel{f}\rightarrow Z$ in $\mathcal{E}$ and any integer $p$, we define a \emph{standard triangle} in $\mathcal{S}$ as follows.
$$ (X, p) \stackrel{\iota_p(g)}\longrightarrow (Y, p) \stackrel{\iota_p(f)}\longrightarrow (Z, p) \stackrel{-\iota_{p+1}(h)}\longrightarrow (X, p+1)=\Sigma(X, p)$$
Denote by $\tilde{\mathcal{E}}$ the isomorphism closure of the class of standard triangles. The members of $\tilde{\mathcal{E}}$ are called \emph{exact triangles}. 

The following result is due to \cite[2.1~Proposition]{KV} and \cite[Subsection~3.1]{Bel}; compare \cite[Subsection~4.4]{Gra}.

\begin{prop}\label{prop:stab-tri}
    Let $(\mathcal{C}, \Omega, \mathcal{E})$ be a left triangulated category. Then $(\mathcal{S}, \Sigma, \tilde{\mathcal{E}})$ is a triangulated category. Moreover, the stabilization functor
    $$(\mathbf{S}, \theta)\colon (\mathcal{C}, \Omega, \mathcal{E})\longrightarrow (\mathcal{S}, \Sigma^{-1}, \tilde{\mathcal{E}})$$
is a triangle functor.
\end{prop}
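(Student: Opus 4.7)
The plan is to reduce everything about the class $\tilde{\mathcal{E}}$ in $(\mathcal{S}, \Sigma)$ to the left-triangulated structure on $(\mathcal{C}, \Omega)$ by means of Remark~\ref{rem:enlarge}. First I would verify that $\mathcal{S}$ is additive: each Hom-set is a filtered colimit of abelian groups with additive transition maps, so inherits an abelian group structure; composition is bilinear by construction; and any two objects $(X,n)$, $(Y,m)$ can be brought to equal second entry by Lemma~\ref{lem:stab}(1), after which $(X\oplus Y, n)$ with the evident injections and projections serves as their biproduct. By definition $\Sigma$ is an automorphism, so the remaining task is to check the axioms (TR1)--(TR4) of a triangulation for $\tilde{\mathcal{E}}$.

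The axioms (TR1), (TR3) and (TR4) reduce painlessly to the corresponding axioms in $\mathcal{C}$. Closure under isomorphism is built into the definition of $\tilde{\mathcal{E}}$. The identity triangle on $(X,n)$ is the standard triangle at level $n$ of the trivial left-triangle $\Omega(0)\to X\stackrel{\mathrm{Id}}\to X\to 0$ supplied by (LTR1). To fit an arbitrary morphism $u\colon (X,n)\to (Y,m)$ in $\mathcal{S}$ into an exact triangle, Remark~\ref{rem:enlarge} puts $u$ in the form $\Sigma^{p}\mathbf{S}(f)$ for some morphism $f$ in $\mathcal{C}$, and (LTR1) applied to $f$ followed by $\Sigma^{p}\mathbf{S}$ produces the desired standard triangle. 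For (TR3), given two exact triangles with a commuting square between their first two terms, I would first replace both triangles by standard triangles at a common, sufficiently large level $p$ coming from exact left-triangles in $\mathcal{C}$; since the Hom-colimits are filtered and only finitely many morphisms are involved, by enlargement I may further assume the commuting square is the image under $\Sigma^{p}\mathbf{S}$ of a commuting square in $\mathcal{C}$. The filler supplied by (LTR3) in $\mathcal{C}$ then descends to the required filler in $\mathcal{S}$. The octahedral axiom (TR4) would be verified in the same manner, by lifting the composable pair $X\to Y\to Z$ to a common level in $\mathcal{C}$ and invoking (LTR4).

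The main obstacle will be the rotation axiom (TR2), because (LTR2) supplies only one-sided rotation while in a triangulated category rotation must be invertible. For the standard triangle
$$ (X, p) \stackrel{\iota_p(g)}\longrightarrow (Y, p) \stackrel{\iota_p(f)}\longrightarrow (Z, p) \stackrel{-\iota_{p+1}(h)}\longrightarrow (X, p+1) $$
associated to $\Omega(Z)\stackrel{h}\to X\stackrel{g}\to Y\stackrel{f}\to Z$, applying (LTR2) would yield the left-rotated left-triangle $\Omega(Y)\stackrel{-\Omega(f)}\to \Omega(Z)\stackrel{h}\to X\stackrel{g}\to Y$, whose associated standard triangle at level $p$ is
$$ (\Omega(Z),p)\stackrel{\iota_p(h)}\longrightarrow (X,p)\stackrel{\iota_p(g)}\longrightarrow (Y,p)\stackrel{\iota_{p+1}(\Omega(f))}\longrightarrow (\Omega(Z),p+1). $$
Via the canonical isomorphisms $(\Omega(Z),p)\simeq (Z,p-1)=\Sigma^{-1}(Z,p)$ and $(\Omega(Z),p+1)\simeq (Z,p)$ from Lemma~\ref{lem:stab}(1), together with the identity $\iota_{p+1}(\Omega(f))=\iota_p(f)$ from~(\ref{equ:omega}), this triangle is identified, up to a careful sign check, with the left-rotation in the triangulated sense of the original standard triangle. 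The right-rotation will then follow formally from the invertibility of $\Sigma$. The delicate point is to track the minus signs introduced by (LTR2) together with the minus sign in the third arrow of a standard triangle so that the resulting isomorphism of triangles is sign-consistent.

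Finally, that the stabilization functor $(\mathbf{S},\theta)$ is a triangle functor would follow directly from the definitions: for an exact left-triangle $\Omega(Z)\stackrel{h}\to X\stackrel{g}\to Y\stackrel{f}\to Z$, its image sequence in $\mathcal{S}$ using the canonical isomorphism $\theta_Z\colon \mathbf{S}\Omega(Z)\stackrel{\sim}\to\Sigma^{-1}\mathbf{S}(Z)$ is, after the standard identification of left-triangles in the triangulated category $\mathcal{S}$ with exact triangles via a shift, precisely the standard triangle at level $0$ associated to the same left-triangle, which lies in $\tilde{\mathcal{E}}$ by construction.
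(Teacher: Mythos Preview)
Your proposal is correct and follows essentially the same strategy as the paper: reduce each axiom to the corresponding (LTR) axiom in $\mathcal{C}$ via Remark~\ref{rem:enlarge}. You add welcome detail the paper omits---the verification of additivity of $\mathcal{S}$, and the explicit mechanism by which the one-sided (LTR2) yields rotation in both directions (the paper simply asserts this ``follows immediately''); your observation that right rotation comes from iterating the established left rotation on the $\Sigma$-shifted standard triangle is the right way to unpack that assertion.
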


The resulting triangulated category $(\mathcal{S}(\mathcal{C}, \Omega), \Sigma, \tilde{\mathcal{E}})$ might be denoted by $\mathcal{S}(\mathcal{C}, \Omega, \mathcal{E})$.

\begin{proof}
    We have to verify the axioms (TR1)-(TR4) for the triple $(\mathcal{S}, \Sigma, \tilde{\mathcal{E}})$. Recall from Remark~\ref{rem:enlarge} that any morphism in $\mathcal{S}$ is isomorphic to the one of the form $\iota_p(f)\colon (A, p)\rightarrow (B, p)$ for some morphism $f\colon A\rightarrow B$ in $\mathcal{C}$. Then (TR1) follows from (LTR1) of $\mathcal{C}$.

    For (TR2), we may start with a standard triangle. Then (TR2), the rotation axiom in both directions,  follows immediately from (LTR2) of $\mathcal{C}$. For (TR3), we may assume that the following two given triangles are both standard, and that the middle square is commutative. 
\[
\xymatrix{(X, p) \ar[r]^-{\iota_p(g)} & (Y, p)\ar[d]_-{\eta} \ar[r]^-{\iota_p(f)} & (Z, p)\ar[d]^-{\kappa} \ar[r]^-{-\iota_{p+1}(h)} & (X, p+1)\\ 
(X', q) \ar[r]^-{\iota_q(g')} & (Y', q) \ar[r]^-{\iota_q(f')} & (Z', q) \ar[r]^-{-\iota_{q+1}(h')} & (X', q+1)
}\]
By Remark~\ref{rem:enlarge} again, we may assume that $q=p$. Moreover, we may assume that $\kappa=\iota_p(a)$ and $\eta=\iota_p(b)$. By possibly enlarging the second entries again,  the middle commutative  square implies  that $a\circ f=f'\circ b$ in $\mathcal{C}$. Applying (LTR3), we obtain a morphism $c\colon X\rightarrow X'$ in $\mathcal{C}$. Then $\iota_p(c)$ is a required morphism.

By Remark~\ref{rem:enlarge}, any composable pair in $\mathcal{S}$ is isomorphic to the one of the form 
    $$ (A, p)\stackrel{\iota_p(f)}\longrightarrow (B, p) \stackrel{\iota_p(g)} \longrightarrow (C, p)$$
    for some composable pair $(f, g)$ in $\mathcal{C}$ and a sufficiently large integer $p$. Then (TR4) follows from (LTR4) of $\mathcal{C}$ and necessary rotations.
\end{proof}

\begin{rem}
    We mention that certain differential graded enhancements of left triangulated categories and their stabilizations are investigated in \cite{XF}.
\end{rem}

The stabilization functor $(\mathbf{S}, \theta)$ above enjoys a universal property. Let $(\mathcal{D}, T, \mathcal{E}')$ be a triangulated category with the suspension functor $T$ being an automorphism. Consider a triangle functor $(F, \delta)\colon (\mathcal{C}, \Omega, \mathcal{E}) \rightarrow (\mathcal{D}, T^{-1}, \mathcal{E}')$.  Here, $ (\mathcal{D}, T^{-1}, \mathcal{E}')$ is the corresponding left triangulated category of $(\mathcal{D}, T, \mathcal{E}')$; see Remark~\ref{rem:left}(3).  

\begin{cor}\label{cor:trifun}
    Keep the assumptions above. Then there is a unique functor 
    $$\tilde{F}\colon (\mathcal{S}, \Sigma, \tilde{\mathcal{E}}) \longrightarrow (\mathcal{D}, T, \mathcal{E}'),$$
    which is a strictly triangle functor between triangulated categories satisfying $F=\tilde{F}\mathbf{S}$ and $\delta=\tilde{F}\theta$.
\end{cor}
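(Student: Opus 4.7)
The plan is to derive this corollary almost formally from Proposition~\ref{prop:univ}, supplemented by a single check that $\tilde{F}$ sends standard triangles to distinguished triangles. Since $T$ is an automorphism of $\mathcal{D}$, so is $T^{-1}$, and hence $(\mathcal{D}, T^{-1})$ is a strictly stable looped category. Applying Proposition~\ref{prop:univ} to the looped functor $(F, \delta)\colon (\mathcal{C}, \Omega) \rightarrow (\mathcal{D}, T^{-1})$ yields a unique strictly looped functor $\tilde{F}\colon (\mathcal{S}, \Sigma^{-1}) \rightarrow (\mathcal{D}, T^{-1})$ satisfying $F = \tilde{F}\mathbf{S}$ and $\delta = \tilde{F}\theta$. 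Rearranging the strictness identity $\tilde{F}\Sigma^{-1} = T^{-1}\tilde{F}$ into $\tilde{F}\Sigma = T\tilde{F}$ shows that $\tilde{F}$ is also strictly compatible with the triangulated suspensions, and uniqueness carries over verbatim. Additivity of $\tilde{F}$ is immediate from the explicit formula supplied in the proof of Proposition~\ref{prop:univ}.

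It then remains to check that $\tilde{F}$ carries every member of $\tilde{\mathcal{E}}$ to a distinguished triangle in $\mathcal{E}'$. Since $\tilde{\mathcal{E}}$ is the isomorphism closure of standard triangles and the class of distinguished triangles is stable under isomorphism, it suffices to handle a standard triangle
$$(X, p) \xrightarrow{\iota_p(g)} (Y, p) \xrightarrow{\iota_p(f)} (Z, p) \xrightarrow{-\iota_{p+1}(h)} (X, p+1)$$
coming from an exact left-triangle $\Omega(Z) \stackrel{h}\rightarrow X \stackrel{g}\rightarrow Y \stackrel{f}\rightarrow Z$ in $\mathcal{E}$. Substituting into the formula for $\tilde{F}$ from the proof of Proposition~\ref{prop:univ}, and using $\tilde{F}\Sigma = T\tilde{F}$, the image is $T^p$ applied to
$$F(X) \xrightarrow{F(g)} F(Y) \xrightarrow{F(f)} F(Z) \xrightarrow{-T(F(h) \circ \delta_Z^{-1})} TF(X).$$
Since $(F, \delta)$ is a triangle functor, the sequence $T^{-1}F(Z) \xrightarrow{F(h)\circ\delta_Z^{-1}} F(X) \xrightarrow{F(g)} F(Y) \xrightarrow{F(f)} F(Z)$ belongs to $\mathcal{E}'$ as an exact left-triangle in $(\mathcal{D}, T^{-1}, \mathcal{E}')$, and the identification of left-triangulated and triangulated structures from Remark~\ref{rem:left}(3) converts it into a distinguished triangle of exactly the displayed form. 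Applying the triangle autoequivalence $T^p$ preserves distinguishedness, completing the verification.

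The main subtlety, and essentially the only nontrivial point, is sign bookkeeping: one must confirm that the explicit minus sign built into the definition of a standard triangle matches the sign in the rotation axiom (LTR2) under the passage between the exact left-triangle in $\mathcal{E}'$ and a genuine distinguished triangle in $(\mathcal{D}, T, \mathcal{E}')$. No axiom of the triangulated category $\mathcal{S}$ needs to be revisited, since all of them were already established in Proposition~\ref{prop:stab-tri}; the entire corollary reduces to the universal property of Proposition~\ref{prop:univ} together with this single compatibility.
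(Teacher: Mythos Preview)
Your proposal is correct and follows essentially the same approach as the paper's proof, which simply asserts that it suffices to observe that $\tilde{F}$ sends standard triangles to exact triangles using the assumption that $(F,\delta)$ is a triangle functor. You have merely filled in the details that the paper suppresses: the explicit invocation of Proposition~\ref{prop:univ}, the computation of the image of a standard triangle via the formula given there, and the sign check needed to pass between the left-triangulated and triangulated conventions under Remark~\ref{rem:left}(3).
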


\begin{proof}
    It suffices to observe that $\tilde{F}$ sends standard triangles in $\mathcal{S}$ to exact triangles in $\mathcal{T}$, using the assumption that $(F, \delta)$ is a triangle functor. 
\end{proof}

In what follows, we write $\mathcal{C}=(\mathcal{C}, \Omega, \mathcal{E})$. The stabilization $\mathcal{S}(\mathcal{C}, \Omega, \mathcal{E})$ will be denoted by $\mathcal{S}(\mathcal{C})$.

Let $(H, \sigma)\colon \mathcal{C}\rightarrow \mathcal{C}'$ be  a triangle functor between two left triangulated categories. By the universal property above, there is a strictly triangle functor $\mathcal{S}(H, \sigma)$ making the following square commute.
\[\xymatrix{
\mathcal{C}\ar[d]_-{(\mathbf{S}, \theta)} \ar[rr]^-{(H, \sigma)} && \mathcal{C}' \ar[d]^-{(\mathbf{S}, \theta)}\\
\mathcal{S}(\mathcal{C}) \ar@{.>}[rr]^-{\mathcal{S}(H, \sigma)} && \mathcal{S}(\mathcal{C}') 
}\]

The following notion is taken from \cite[Section~2]{Chen18}.

\begin{defn}\label{defn:pte}
     The triangle functor $(H, \sigma) \colon  \mathcal{C}\rightarrow \mathcal{C}' $ is called a \emph{pre-triangle equivalence}, provided that the strictly triangle functor $\mathcal{S}(H, \sigma)$ between the stabilizations is an equivalence, and thus a triangle equivalence. 
\end{defn}

The following example is about semisimple (left) triangulated categories; compare \cite[Lemma~2.1]{Kalck} and \cite[Lemma~3.4]{Chen11}. 

\begin{exm}\label{exm:ss}
Let $\mathcal{C}$ be a \emph{semisimple} additive category, that is, any morphism $X\rightarrow Y$ is isomorphic to the direct sum of ${\rm Id}_A\colon A\rightarrow A$ and $0\colon B\rightarrow C$   for some objects $A, B, C$. Let $\Omega$ be any additive endofunctor on $\mathcal{C}$. Denote by $\mathcal{E}^{\rm tr}$ the isomorphism closure of direct sums the following \emph{trivial} left-triangles: $\Omega(A) \rightarrow 0\rightarrow A \stackrel{{\rm Id}_A}\rightarrow A$ and $\Omega(C) \xrightarrow{\binom{0}{{\rm Id}_{\Omega(C)}}} B\oplus \Omega(C) \xrightarrow{({\rm Id}_B, 0)}  B \stackrel{0}\rightarrow C$.  Then $(\mathcal{C}, \Omega, \mathcal{E}^{\rm tr})$ is a left triangulated category. Since $\mathcal{E}^{\rm tr}$ is the unique left triangulated structure on $(\mathcal{C}, \Omega)$, we will denote the resulting left triangulated category simply by  $(\mathcal{C}, \Omega)$.

By Lemma~\ref{lem:stab}(2), the stablization $(\mathcal{S}, \Sigma, \tilde{\mathcal{E}}^{\rm tr})$ is also semisimple. In other words, any exact triangle in $\mathcal{S}$ is isomorphic to direct sums of trivial ones. 
\end{exm}

\section{The stable module category and singularity category}

In this section, we give a detailed proof on the following fundamental result: the big singularity category of an arbitrary ring is triangle equivalent to the stabilization of its stable module category; see Theorem~\ref{thm:BKVB}. The Gorenstein case of the result is due to \cite[Theorem~6.5.3]{Buc}, and the general case is due to \cite[Section~2]{KV}. We mention the treatment in \cite[Subsection~3.1]{Bel}, which seems leaving  out some subtle details.

 Let $R$ be an arbitrary unital ring. Denote by $R\mbox{-Mod}$ the category of all left $R$-modules, and by $R\mbox{-Proj}$ the full subcategory formed by projective $R$-modules. The factor category of $R\mbox{-Mod}$ modulo all morphisms factoring through projective modules is denoted by  $R\mbox{-\underline{Mod}}$, called the  \emph{stable module category} over $R$. The Hom-group between two $R$-modules $M$ and $N$ in $R\mbox{-\underline{Mod}}$ is denoted by ${\rm \underline{Hom}}_R(M, N)$. For any homomorphism $f\colon M \rightarrow N$, the corresponding element in ${\rm \underline{Hom}}_R(M, N)$ is denoted by $\underline{f}$.

 For each $R$-module $M$, we fix a short exact sequence 
 \begin{align}\label{equ:Omega}
 0\longrightarrow \Omega_R(M) \stackrel{\iota_M} \longrightarrow P(M) \stackrel{\pi_M}\longrightarrow M \longrightarrow 0
 \end{align}
 with $P(M)$ projective. This gives rise to the \emph{syzygy endofunctor} $\Omega_R\colon  R\mbox{-\underline{Mod}}\rightarrow R\mbox{-\underline{Mod}}$. Each short exact sequence $\xi \colon 0\rightarrow N \rightarrow E \rightarrow M \rightarrow 0$ in $R\mbox{-Mod}$ fits into a commutative diagram.
 \begin{align}\label{diag:ct}
 \xymatrix{
 \Omega_R(M) \ar[d]_-{h} \ar[rr]^-{\iota_M} && P(M)\ar@{.>}[d]^-{u} \ar[rr]^-{\pi_M} && M \ar@{=}[d] \\
 N \ar[rr]^-g && E \ar[rr]^-f && M 
  }
  \end{align}
  The homomorphism $h$ is not unique in general. However, the corresponding morpshim $\underline{h}$ is unique. The following sequence in $R\mbox{-\underline{Mod}}$
  \begin{align}\label{tri:can}
      \Omega_R(M) \stackrel{\underline{h}} \longrightarrow N \stackrel{\underline{g}} \longrightarrow E \stackrel{\underline{f}} \longrightarrow M
  \end{align}
  is called the \emph{canonical left-triangle} associated to $\xi$. Denote by $\mathcal{E}_R$ the isomorphism closure of such canonical left-triangles.

The following result is due to \cite[Theorem~7.1]{BM}. A similar consideration is traced back to \cite[Section~4]{Hel60}, \cite[Theorem~9.2]{Hel68} and \cite[Theorem~I.2.8]{Hap}.

  \begin{prop}
    The triple $(R\mbox{-\underline{\rm Mod}}, \Omega_R, \mathcal{E}_R)$ is a left triangulated category. \hfill
  \end{prop}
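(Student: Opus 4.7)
The plan is to verify the axioms (LTR1)--(LTR4) directly by translating constructions on short exact sequences of $R$-modules through the canonical-triangle recipe (\ref{tri:can}), using throughout that any homomorphism factoring through a projective is zero in $R\mbox{-\underline{Mod}}$ and that the stable class $\underline{h}$ appearing in diagram (\ref{diag:ct}) is independent of the chosen lift. For (LTR1), closure of $\mathcal{E}_R$ under isomorphism is built into its definition; the trivial triangle $\Omega_R(0)\to X\xrightarrow{\underline{\mathrm{Id}}} X\to 0$ comes from the split sequence $0\to X\xrightarrow{\mathrm{Id}} X\to 0\to 0$ with $\Omega_R(0)=0$. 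To realize an arbitrary stable morphism $\underline{f}\colon Y\to Z$ as the last map of a triangle, lift to a homomorphism $f$, form the surjection $(f,\pi_Z)\colon Y\oplus P(Z)\twoheadrightarrow Z$, and take its kernel short exact sequence; the resulting canonical triangle has last map $\underline{f}$ once the projective summand $P(Z)$ is discarded in $R\mbox{-\underline{Mod}}$.

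For (LTR2), start with $\xi\colon 0\to N\xrightarrow{g} E\xrightarrow{f} M\to 0$ and form the pullback $P(M)\times_M E$. The projection onto $P(M)$ is a split epimorphism with kernel $N$, so $P(M)\times_M E\cong N\oplus P(M)$, while the projection onto $E$ has kernel $\Omega_R(M)$, producing a short exact sequence $0\to\Omega_R(M)\to N\oplus P(M)\to E\to 0$. Computing its canonical triangle, and using projectivity of $P(E)$ to identify the connecting map $\Omega_R(E)\to\Omega_R(M)$ with $\Omega_R(\underline{f})$, shows that in $R\mbox{-\underline{Mod}}$ it matches (up to the automorphism $-\mathrm{Id}$ of $\Omega_R(M)$, which absorbs the expected sign) the rotated triangle $\Omega_R(E)\xrightarrow{-\Omega_R(\underline{f})}\Omega_R(M)\xrightarrow{\underline{h}} N\xrightarrow{\underline{g}} E$. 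For (LTR3), lift $a$ and $b$ to honest homomorphisms; the assumption $a\circ\underline{f}=\underline{f'}\circ b$ says that $a\circ f-f'\circ b$ factors through a projective, and by adjusting $b$ accordingly we may assume $a\circ f=f'\circ b$ in $R\mbox{-Mod}$. Projectivity of $P(M)$ then supplies a homomorphism $c\colon X\to X'$ filling in the middle square, whose stable class is the required morphism.

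The main obstacle is (LTR4). Given a composable pair $X\xrightarrow{v} Y\xrightarrow{u} Z$, I would lift $u$ and $v$ to honest homomorphisms and attach to each of $v$, $u$, and $u\circ v$ the short exact sequence built in (LTR1), producing three canonical triangles that sit on the boundary of the octahedron. The snake lemma applied to the resulting $3\times 3$ configuration in $R\mbox{-Mod}$ then supplies a fourth short exact sequence $0\to A\to B\to C\to 0$, whose arrows realize the two dotted morphisms, and whose canonical triangle is the missing middle column $\Omega_R(C)\to A\to B\to C$. The delicate point is to verify that the connecting map $\Omega_R(C)\to A$ coincides in $R\mbox{-\underline{Mod}}$ with the one forced by the two outer columns already in place; the uniqueness of $\underline{h}$ in (\ref{diag:ct}) and the disappearance of lift ambiguities in the stable category make this compatibility automatic.
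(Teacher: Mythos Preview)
The paper does not prove this proposition; it simply attributes the result to \cite[Theorem~7.1]{BM} (with antecedents in Heller and Happel) and moves on. Your direct verification of (LTR1)--(LTR4) is precisely the route taken in that reference, so you are reconstructing the cited argument rather than departing from anything the paper actually does.

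Your treatments of (LTR1)--(LTR3) are essentially correct. Two small remarks. In (LTR3), the adjustment of $b$ so that $a\circ f=f'\circ b$ holds on the nose uses that $f'$ is a genuine surjection of modules---which is the case once both rows have been replaced by their canonical representatives coming from short exact sequences---so that the projective through which $af-f'b$ factors can be lifted along $f'$. Also, the phrase ``projectivity of $P(M)$ then supplies $c$'' is slightly off: the fill-in $c$ is just the restriction of the adjusted $b$ to kernels; projectivity of $P(Z)$ is what you need afterwards to verify that the leftmost square $\underline{c}\circ\underline{h}=\underline{h'}\circ\Omega_R(\underline{a})$ commutes stably.

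The one place that needs genuine care is (LTR4). If you attach short exact sequences to $v$, $u$, and $u\circ v$ \emph{independently} via the recipe from (LTR1), there is no reason for them to assemble into a $3\times 3$ diagram with exact rows and columns, so the snake lemma cannot be applied as stated. The sequences must be built compatibly: for instance, take $Y'=Y\oplus P(Z)\twoheadrightarrow Z$ for $u$ with kernel $C$, then let $X''=X\oplus P(Y)\oplus P(Z)\twoheadrightarrow Y'$ realise $v$ with kernel $A$, and use the composite $X''\twoheadrightarrow Z$ as the sequence for $u\circ v$ with kernel $B$. Only with such a nested construction does the $3\times 3$ lemma produce the exact column $0\to A\to B\to C\to 0$, after which the remaining commutativities in the octahedron follow from the explicit maps together with the uniqueness of $\underline{h}$ that you invoke. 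With this adjustment your outline goes through.
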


An $R$-module $M$ is said to have a \emph{component-wise  finite resolution} if there is a projective resolution 
$\cdots \rightarrow P^{-2} \rightarrow P^{-1} \rightarrow P^0 \rightarrow M\rightarrow 0$ with each $P^{-i}$ being finitely generated. Denote by $R\mbox{-mod}^{\rm fr}$ the full subcategory consisting of such modules. It is closed under extensions and kernels of surjective homomorphisms. Consequently, the restricted syzygy endofunctor $\Omega_R\colon R\mbox{-\underline{mod}}^{\rm fr}\rightarrow  R\mbox{-\underline{mod}}^{\rm fr}$ is well defined. Furthermore, we have a left triangulated category $(R\mbox{-\underline{mod}}^{\rm fr}, \Omega_R, \mathcal{E}^{\rm fr}_R)$, where $\mathcal{E}^{\rm fr}_R$ is the subclass of $\mathcal{E}_R$ formed by modules having component-wise finite resolutions.  We mention if $R$ is left coherent, we have $R\mbox{-mod}^{\rm fr}=R\mbox{-mod}$, the category of finitely presented $R$-modules.  

The following facts will be used later. 

\begin{rem}\label{rem:Z}
(1)    Denote by $\mathbf{K}(R\mbox{-Proj})$ the homotpy category of cochain complexes of projective $R$-modules. For each complex $P$ and an integer $n$, we denote by $Z^{n}(P)={\rm Ker}(d_P^n\colon P^n\rightarrow P^{n+1})$ the $n$-th cocycle. This gives rise to a well-defined functor $Z^n\colon \mathbf{K}(R\mbox{-Proj})\rightarrow R\mbox{-\underline{\rm Mod}}$. 

(2) For each module $M$, we use the sequence (\ref{equ:Omega}) repeatedly and obtain  a projective resolution $P_M$ such that $P^{0}=P(M)$ and $P^{-n}=P(\Omega^{n}_R(M))$. For any homomorphism $f\colon M\rightarrow N$, we lift it to a morphism $\tilde{f}\colon P_M \rightarrow P_N$ between complexes. We observe that $Z^{-n}(\tilde{f})=\Omega_R^{n+1}(\underline{f})$ for any $n\geq 1$. 
\end{rem}

We denote by $\mathbf{D}^b(R\mbox{-Mod})$ the bounded derived category of $R\mbox{-Mod}$, and by $\mathbf{K}^b(R\mbox{-Proj})$ the bounded homotopy category of $R\mbox{-Proj}$. We view   $\mathbf{K}^b(R\mbox{-Proj})$ as a triangulated subcategory of  $\mathbf{D}^b(R\mbox{-Mod})$. Denote by $R\mbox{-proj}$ the category of finitely generated projective $R$-modules, and by  $\mathbf{K}^b(R\mbox{-proj})$ its bounded homotopy category. Denote by $\mathbf{K}^{-,b}(R\mbox{-proj})$ the homotopy category of right-bounded complexes in $R\mbox{-proj}$ with bounded cohomologies. We denote by $\Sigma$ the suspension functor on complexes. 

The following notions are due to \cite{Buc}, which are rediscovered by \cite{Orl} in the geometric setting; compare \cite{Hap91, Bel}.

\begin{defn}
    The \emph{big singularity category} of $R$ is defined to be the Verdier quotient category $\mathbf{D}'_{\rm sg}(R)=\mathbf{D}^b(R\mbox{-Mod})/\mathbf{K}^b(R\mbox{-Proj})$, and the \emph{singularity category} of $R$ is defined to be $\mathbf{D}_{\rm sg}(R)=\mathbf{K}^{-,b}(R\mbox{-proj})/{\mathbf{K}^{b}(R\mbox{-proj})}$.
\end{defn}

Recall  by definition that the objects in $\mathbf{D}'_{\rm sg}(R)$ are bounded complexes of $R$-modules. We view $R$-modules as stalk complexes concentrated in degree zero. For any subcategory $\mathcal{X}$ in a triangulated category, ${\rm tri}\langle \mathcal{X} \rangle$ denotes the smallest triangulated subcategory containing $\mathcal{X}$.

\begin{lem}\label{lem:fr}
    The singularity category $\mathbf{D}_{\rm sg}(R)$ is triangle equivalent to the smallest triangulated subcategory of $\mathbf{D}'_{\rm sg}(R)$ containing $R\mbox{-{\rm mod}}^{\rm fr}$. 
\end{lem}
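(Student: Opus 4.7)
The plan is to construct a natural triangle functor $\bar\iota\colon \mathbf{D}_{\rm sg}(R) \to \mathbf{D}'_{\rm sg}(R)$ and verify that it is a triangle equivalence onto ${\rm tri}\langle R\mbox{-mod}^{\rm fr}\rangle$. The tautological inclusion $\iota\colon \mathbf{K}^{-,b}(R\mbox{-proj}) \hookrightarrow \mathbf{D}^b(R\mbox{-Mod})$, sending a complex to itself, is a fully faithful triangle functor, since right-bounded complexes of projectives are K-projective. Since $\iota$ carries $\mathbf{K}^b(R\mbox{-proj})$ into $\mathbf{K}^b(R\mbox{-Proj})$, the universal property of the Verdier quotient yields the desired $\bar\iota$.

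For the essential image, each $M \in R\mbox{-mod}^{\rm fr}$ with its chosen component-wise finite projective resolution $P_M \in \mathbf{K}^{-,b}(R\mbox{-proj})$ satisfies $\bar\iota(P_M) \cong M$ in $\mathbf{D}'_{\rm sg}(R)$, so $R\mbox{-mod}^{\rm fr}$ lies in the essential image. Conversely, for $P^\bullet \in \mathbf{K}^{-,b}(R\mbox{-proj})$ with cohomology concentrated in degrees $\geq -m$ and any $k \geq m$, the stupid-truncation short exact sequence of complexes
\[
0 \to \sigma^{\geq -k}(P^\bullet) \to P^\bullet \to \sigma^{<-k}(P^\bullet) \to 0
\]
induces a triangle in $\mathbf{D}^b(R\mbox{-Mod})$ whose first term lies in $\mathbf{K}^b(R\mbox{-proj})$ and hence vanishes in $\mathbf{D}'_{\rm sg}(R)$, while the third term is a component-wise finite projective resolution of $B^{-k}(P^\bullet)[k+1]$, exhibiting $B^{-k}(P^\bullet) \in R\mbox{-mod}^{\rm fr}$. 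Therefore $\bar\iota(P^\bullet) \cong B^{-k}(P^\bullet)[k+1]$ in $\mathbf{D}'_{\rm sg}(R)$, which lies in ${\rm tri}\langle R\mbox{-mod}^{\rm fr}\rangle$.

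For fully faithfulness I would first establish the auxiliary identification $\mathbf{K}^{-,b}(R\mbox{-proj}) \cap \mathbf{K}^b(R\mbox{-Proj}) = \mathbf{K}^b(R\mbox{-proj})$ inside $\mathbf{D}^b(R\mbox{-Mod})$. The nontrivial direction: if $P^\bullet \in \mathbf{K}^{-,b}(R\mbox{-proj})$ is quasi-isomorphic to a bounded complex $Q^\bullet$ of arbitrary projectives, then K-projectivity of $P^\bullet$ promotes this to a homotopy equivalence $f\colon P^\bullet \to Q^\bullet$ in $\mathbf{K}^-(R\mbox{-Proj})$ with homotopy inverse $g$. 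For $k$ beyond the amplitude of $Q^\bullet$, unwinding the homotopy relation $gf \sim 1_{P^\bullet}$ at degree $-k$ exhibits the syzygy $Z^{-k}(P^\bullet)$ as a direct summand of $P^{-k-1}$, hence as a finitely generated projective module. Iterating, one decomposes $P^\bullet$ in $\mathbf{K}^-(R\mbox{-proj})$ as a direct sum of a null-homotopic projective resolution of $Z^{-k}(P^\bullet)$ (the contractible tail) and a bounded complex of finitely generated projectives, so $P^\bullet \in \mathbf{K}^b(R\mbox{-proj})$.

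Granting this identification, fully faithfulness of $\bar\iota$ reduces to a Verdier-type Ore condition: for every morphism $s\colon Y^\bullet \to P^\bullet$ in $\mathbf{D}^b(R\mbox{-Mod})$ with $P^\bullet \in \mathbf{K}^{-,b}(R\mbox{-proj})$ and $\mathrm{cone}(s) \in \mathbf{K}^b(R\mbox{-Proj})$, one must find ${P'}^\bullet \in \mathbf{K}^{-,b}(R\mbox{-proj})$ together with a morphism ${P'}^\bullet \to Y^\bullet$ such that the composite ${P'}^\bullet \to P^\bullet$ has cone in $\mathbf{K}^b(R\mbox{-proj})$. This is the main obstacle; the approach is to represent the connecting map $P^\bullet \to \mathrm{cone}(s)$ by a chain map, factor it through a bounded complex of finitely generated projectives built degree by degree from the finitely generated images of $P^\bullet$ in $\mathrm{cone}(s)$, and then take ${P'}^\bullet$ to be the fibre of the resulting map, using the closure of $R\mbox{-mod}^{\rm fr}$ under extensions and kernels of surjections to keep all intermediate objects within $\mathbf{K}^{-,b}(R\mbox{-proj})$.
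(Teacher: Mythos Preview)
Your overall strategy coincides with the paper's: construct the natural functor $\bar\iota\colon \mathbf{D}_{\rm sg}(R)\to \mathbf{D}'_{\rm sg}(R)$ via the inclusion $\mathbf{K}^{-,b}(R\mbox{-proj})\hookrightarrow \mathbf{D}^b(R\mbox{-Mod})$, identify the essential image with ${\rm tri}\langle R\mbox{-mod}^{\rm fr}\rangle$ by truncation, and then argue full faithfulness. The paper does exactly this but handles full faithfulness in one line by invoking \cite[Proposition~1.13]{Orl}; you instead unpack that citation by (i) proving $\mathbf{K}^{-,b}(R\mbox{-proj})\cap \mathbf{K}^b(R\mbox{-Proj})=\mathbf{K}^b(R\mbox{-proj})$ and (ii) checking the right Ore condition directly. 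Step (i) is carried out correctly.

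For step (ii), your idea is right but the description is slightly off. Taking ``the finitely generated images of $P^\bullet$ in ${\rm cone}(s)$'' does not give projective modules, and the naive candidate $Q=\sigma^{[a,b]}(P)$ fails because there is no chain map $P\to Q$. What actually works is an inductive construction: for a chain map $\phi\colon P\to C$ with $C$ concentrated in $[a,b]$, first use that any map from a finitely generated module to a projective module factors through a finitely generated projective (embed the target as a summand of a free module and take the finitely generated free summand containing the image), then build $Q^a,Q^{a+1},\ldots,Q^b$ degree by degree, at each stage factoring the map from the appropriate finitely generated cokernel into $C^n$ through a finitely generated projective so that the chain-map and $d^2=0$ conditions are absorbed. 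With this correction your argument goes through; it is self-contained where the paper's is not, at the cost of this extra work.
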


      \begin{proof}
          Denote by $\mathbf{K}^{-, b}(R\mbox{-Proj})$ the homotopy category of bounded-above complexes in $R\mbox{-Proj}$ with bounded cohomologies. It is well known that $\mathbf{K}^{-, b}(R\mbox{-Proj})$ is triangle equivalent to $\mathbf{D}^b(R\mbox{-Mod})$; moreover, under the equivalence, $\mathbf{K}^{-, b}(R\mbox{-proj})$ is identified with the subcategory ${\rm tri}\langle  R\mbox{-{\rm mod}}^{\rm fr} \cup \mathbf{K}^{b}(R\mbox{-proj})\rangle$ of $\mathbf{D}^b(R\mbox{-Mod})$. 
          
          We identify $\mathbf{D}'_{\rm sg}(R)$ with $\mathbf{K}^{-,b}(R\mbox{-Proj})/{\mathbf{K}^{b}(R\mbox{-Proj})}$. Therefore, the inclusion $\mathbf{K}^{-,b}(R\mbox{-proj})\subseteq \mathbf{K}^{-,b}(R\mbox{-Proj})$ induces a triangle functor $\mathbf{D}_{\rm sg}(R) \rightarrow \mathbf{D}'_{\rm sg}(R)$. By \cite[Proposition~1.13]{Orl}, this induced functor is fully faithful. Moreover, by the discussion above, we infer that the essential image coincides with ${\rm tri}\langle R\mbox{-{\rm mod}}^{\rm fr} \rangle$. 
      \end{proof}

By identifying modules with the corresponding stalk complexes concentrated in degree zero,  $R\mbox{-Mod}$ is viewed as a full subcategory of $\mathbf{D}^b(R\mbox{-Mod})$. Consider the following composite functor.
$$F\colon R\mbox{-Mod} \hookrightarrow \mathbf{D}^b(R\mbox{-Mod})\xrightarrow{\rm can} \mathbf{D}'_{\rm sg}(R)$$
Here, `${\rm can}$' denotes the quotient functor. The functor $F$ vanishes on projective modules, and induces the following one.
$$F\colon R\mbox{-}\underline{\rm Mod}\longrightarrow \mathbf{D}'_{\rm sg}(R).$$

For any $R$-module $M$, consider the short exact sequence (\ref{equ:Omega}). Denote by ${\rm Cone}(\iota_M)$ the mapping cone of $\iota_M$, which is supported on degrees $-1$ and $0$. The epimorphism $\pi_M$ induces a quasi-isomorphism
$$(0, \pi_M)\colon {\rm Cone}(\iota_M) \longrightarrow M.$$
The projection ${\rm pr}_M\colon {\rm Cone}(\iota_M)\rightarrow \Sigma\Omega_R(M)$ becomes an isomorphism in $\mathbf{D}'_{\rm sg}(R)$, since its cone is isomorphic to $\Sigma(P(M))$ and belongs to $\mathbf{K}^b(R\mbox{-Proj})$. Therefore, the following isomorphism
$$\delta_M=-\Sigma^{-1}(({\rm pr}_M)^{-1}\circ (0, \pi_M))\colon F\Omega_R(M)=\Omega_R(M) \longrightarrow \Sigma^{-1}(M)=\Sigma^{-1}F(M)$$
in $\mathbf{D}'_{\rm sg}(R)$ is well defined. In summary, we have a looped functor 
\begin{align}\label{equ:F-delta}
    (F, \delta)\colon (R\mbox{-}\underline{\rm Mod}, \Omega_R)\longrightarrow (\mathbf{D}'_{\rm sg}(R), \Sigma^{-1}).
\end{align}

\begin{lem}
    The looped functor $(F, \delta)$ above is a triangle functor from the left triangulated category $R\mbox{-}\underline{\rm Mod}$ to $\mathbf{D}'_{\rm sg}(R)$. 
\end{lem}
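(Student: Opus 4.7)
The plan is to realize $(F,\delta)$ applied to each canonical left-triangle as a leftward rotation of the image of a standard distinguished triangle from $\mathbf{D}^b(R\mbox{-}\mathrm{Mod})$. Since $\mathcal{E}_R$ is by definition the isomorphism closure of the canonical left-triangles and the class of exact triangles in a triangulated category is closed under isomorphism, it suffices to verify the triangle-functor condition on a single canonical left-triangle.

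Fix a short exact sequence $\xi\colon 0\to N\xrightarrow{g}E\xrightarrow{f}M\to 0$ and its associated canonical left-triangle (\ref{tri:can}). Let $\partial_\xi\colon M\to \Sigma N$ denote the connecting morphism in $\mathbf{D}^b(R\mbox{-}\mathrm{Mod})$, represented by the standard roof $M\xleftarrow{(0,f)}\mathrm{Cone}(g)\xrightarrow{\mathrm{pr}_g}\Sigma N$. The canonical functor $\mathbf{D}^b(R\mbox{-}\mathrm{Mod})\to\mathbf{D}'_{\mathrm{sg}}(R)$ is triangulated and strictly commutes with $\Sigma$, so rotating once to the left the image of the triangle $N\to E\to M\to\Sigma N$ yields the distinguished triangle
$$\Sigma^{-1}F(M)\xrightarrow{-\Sigma^{-1}F(\partial_\xi)} F(N)\xrightarrow{F(\underline g)} F(E)\xrightarrow{F(\underline f)} F(M)$$
in $\mathbf{D}'_{\mathrm{sg}}(R)$. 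The only remaining task is the identification $F(\underline h)\circ\delta_M^{-1}=-\Sigma^{-1}F(\partial_\xi)$.

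Next, apply the same roof recipe to the defining sequence $0\to\Omega_R(M)\xrightarrow{\iota_M}P(M)\xrightarrow{\pi_M}M\to 0$ to produce its connecting morphism $\partial_M^0\colon M\to \Sigma\Omega_R(M)$. Unwinding (\ref{equ:F-delta}) gives $\delta_M=-\Sigma^{-1}F(\partial_M^0)^{-1}$, so $\delta_M^{-1}=-\Sigma^{-1}F(\partial_M^0)$ and the desired identity reduces to $\partial_\xi=\Sigma h\circ\partial_M^0$ in $\mathbf{D}^b(R\mbox{-}\mathrm{Mod})$. For this I would use diagram (\ref{diag:ct}): a lift $u\colon P(M)\to E$ with $f\circ u=\pi_M$ and $u\circ\iota_M=g\circ h$ assembles into a chain map $\phi=(h,u)\colon\mathrm{Cone}(\iota_M)\to\mathrm{Cone}(g)$ satisfying $(0,f)\circ\phi=(0,\pi_M)$ and $\mathrm{pr}_g\circ\phi=\Sigma h\circ \mathrm{pr}_M$. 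These two identities exhibit the roofs defining $\partial_\xi$ and $\Sigma h\circ\partial_M^0$ as related through $\phi$, hence they represent the same morphism in $\mathbf{D}^b(R\mbox{-}\mathrm{Mod})$.

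The main obstacle will be the sign bookkeeping: the leftward rotation contributes a sign, the definition of $\delta_M$ carries a sign, and the roofs involve inverted quasi-isomorphisms whose directions need tracking. Once both connecting morphisms are realized through the compatible mapping-cone morphism $\phi$, the identification is essentially formal. A secondary subtlety is well-definedness under the non-uniqueness of $h$: two valid lifts differ by $s\iota_M$ for some $s\colon P(M)\to N$, inducing the same $\underline h$ in $R\mbox{-}\underline{\mathrm{Mod}}$, and since $\Sigma\iota_M\circ\partial_M^0=0$ in the distinguished triangle coming from the sequence defining $\Omega_R(M)$, the quantity $\Sigma h\circ\partial_M^0$ is also independent of the choice.
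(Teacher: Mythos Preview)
Your proposal is correct and follows essentially the same route as the paper: both reduce to the identity $F(\underline h)\circ\delta_M^{-1}=-\Sigma^{-1}(\partial_\xi)$ and verify it via the chain map $(h,u)\colon \mathrm{Cone}(\iota_M)\to\mathrm{Cone}(g)$ making the obvious square of cones commute. Your additional remarks on sign bookkeeping and on independence from the choice of lift $h$ are sound but not needed for the argument, since the canonical left-triangle already depends only on $\underline h$.
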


\begin{proof}
    It suffices to show that  $(F, \delta)$ sends canonical left-triangles in $R\mbox{-}\underline{\rm Mod}$ to exact triangles in $\mathbf{D}'_{\rm sg}(R)$. Take any short exact sequence $\xi \colon 0\rightarrow N \stackrel{g}\rightarrow E \stackrel{f} \rightarrow M \rightarrow 0$   of $R$-modules. It is well known that $\xi$ fits into the following exact triangle in $\mathbf{D}^b(R\mbox{-Mod})$ and thus in $\mathbf{D}'_{\rm sg}(R)$.
    \begin{align}\label{tri:insg}
        N \stackrel{g} \longrightarrow E \stackrel{f} \longrightarrow M \xrightarrow{{\rm pr}\circ {(0, f)}^{-1}} \Sigma(N) 
    \end{align}
    Here, ${\rm pr}\colon {\rm Cone}(g) \rightarrow \Sigma(N)$ is the projection, and $(0, f)\colon {\rm Cone}(g)\rightarrow M$ is the quasi-isomorphism induced by $f$. The looped functor $(F, \delta)$ sends (\ref{tri:can}) to the following sequence.
    \begin{align}\label{tri:F}
          \Sigma^{-1}F(M) \xrightarrow{F(\underline{h})\circ \delta_M^{-1}} F(N) \stackrel{F(\underline{g})} \longrightarrow F(E) \stackrel{F(\underline{f})} \longrightarrow F(M)
    \end{align}
    Recall that $F(\underline{g})=g$ and $F(\underline{f})=f$ in $\mathbf{D}'_{\rm sg}(R)$. By comparing (\ref{tri:insg}) and (\ref{tri:F}), it remains to prove the following identity, again in $\mathbf{D}'_{\rm sg}(R)$.
    $$F(\underline{h})\circ \delta_M^{-1}=-\Sigma^{-1}({\rm pr}\circ {(0, f)}^{-1}).$$
    However, the identity above follows immediately from the fact $F(\underline{h})=h$ and the following commutative diagram  of complexes of modules. 
\[\xymatrix{
M\ar@{=}[d]  && \ar[ll]_-{(0, \pi_M)} {\rm Cone}(\iota_M) \ar[d]^-{(h, u)} \ar[rr]^-{{\rm pr}_M} && \Sigma\Omega_R(M)\ar[d]^-{\Sigma(h)}\\
M  && \ar[ll]_-{(0, f)} {\rm Cone}(g) \ar[rr]^-{{\rm pr}} && \Sigma(N)
}\]
Here, $(h, u)\colon {\rm Cone}(\iota_M)\rightarrow {\rm Cone}(g)$ is the quasi-isomorphism induced by $h$ and $u$ in (\ref{diag:ct}).  Then  we are done. 
\end{proof}

In what follows, we consider the following stabilizations.
$$\mathcal{S}=(\mathcal{S}(R\mbox{-}\underline{\rm Mod}), \Omega_R), \Sigma, \tilde{\mathcal{E}}_R)  
\mbox{ and } 
\mathcal{S}^{\rm fr}=(\mathcal{S}(R\mbox{-}\underline{\rm mod}^{\rm fr}, \Omega_R), \Sigma, \tilde{\mathcal{E}}^{\rm fr}_R) $$
By Proposition~\ref{prop:stab-tri}, both are triangulated categories.   The following fundamental result is due to \cite{Buc, KV, Bel}.

\begin{thm}\label{thm:BKVB}
    Let $R$ be an arbitrary ring. Then the triangle functor $(F, \delta)$ above induces a triangle equivalence
    $$\tilde{F}\colon \mathcal{S}\stackrel{\sim}\longrightarrow \mathbf{D}'_{\rm sg}(R),$$
which restricts to a triangle equivalence  $\mathcal{S}^{\rm fr}\stackrel{\sim}\rightarrow\mathbf{D}_{\rm sg}(R)$. 
\end{thm}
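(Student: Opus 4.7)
The plan is to invoke Proposition~\ref{prop:equiv} for the strictly triangle functor $\tilde F$ coming from Corollary~\ref{cor:trifun}, checking denseness, fullness, and faithfulness. The computations would exploit the equivalence $\mathbf{D}^b(R\mbox{-Mod})\simeq \mathbf{K}^{-,b}(R\mbox{-Proj})$ to replace every bounded complex by a bounded-above projective resolution, and then apply brutal truncation to reduce to stalk complexes.

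For denseness, given $Z\in \mathbf{D}'_{\rm sg}(R)$ with bounded-above projective resolution $P$, one picks $n$ so large that $\sigma^{\geq -n+1}(P)$ is a bounded complex of projectives (hence zero in the Verdier quotient), while $\sigma^{\leq -n}(P)$ has cohomology concentrated in degree $-n$, equal to a module $X_n$. The brutal-truncation triangle then yields $\Sigma^{-n}(Z)\simeq F(X_n)$, which is condition~(3). For fullness, a morphism $g\colon F(X)\to F(Y)$ between modules lifts to a chain map $\tilde g\colon P_X\to P_Y$, determined up to chain homotopy and up to chain maps factoring through $\mathbf{K}^b(R\mbox{-Proj})$. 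For $n$ larger than the length of any such bounded factoring object, Remark~\ref{rem:Z} yields a well-defined morphism $Z^{-n}(\tilde g)\colon \Omega_R^{n+1}(X)\to \Omega_R^{n+1}(Y)$ in $R\mbox{-\underline{Mod}}$, providing the candidate for condition~(1). For faithfulness, if $F(\underline{f})=0$ for a module map $f$, then some lift $\tilde f$ factors through a bounded $K\in \mathbf{K}^b(R\mbox{-Proj})$ in the homotopy category; since $Z^{-n}(K)=0$ for $n$ large, functoriality of $Z^{-n}$ forces $\Omega_R^{n+1}(\underline{f})=Z^{-n}(\tilde f)=0$ in $R\mbox{-\underline{Mod}}$, giving condition~(2).

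The main obstacle will be the compatibility condition in Proposition~\ref{prop:equiv}(1), namely $\Sigma^{-(n+1)}(g)=\delta^{(n+1)}_Y\circ F(Z^{-n}(\tilde g))\circ (\delta^{(n+1)}_X)^{-1}$. This amounts to identifying the truncation quasi-isomorphism $\sigma^{\leq -n-1}(P_M)\xrightarrow{\sim}\Sigma^{n+1}(\Omega_R^{n+1}(M))$ with the iterated composite $\delta^{(n+1)}_M$, whose definition in~(\ref{equ:F-delta}) runs through the mapping cone of $\iota_M$ and the projection $\mathrm{pr}_M$ and so looks a priori rather different. The approach is to proceed inductively on $n$: the case $n=0$ reduces to a direct comparison through the quasi-isomorphism $(0,\pi_M)\colon \mathrm{Cone}(\iota_M)\to M$ (essentially the definition of $\delta_M$), and the inductive step follows from naturality of brutal truncation under the shift of $P_M$ by the syzygy differential.

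For the restriction $\mathcal{S}^{\rm fr}\xrightarrow{\sim}\mathbf{D}_{\rm sg}(R)$, the restricted functor remains fully faithful by the above analysis, and its essential image---being a triangulated subcategory of $\mathbf{D}'_{\rm sg}(R)$ containing $R\mbox{-mod}^{\rm fr}$---coincides with $\mathrm{tri}\langle R\mbox{-mod}^{\rm fr}\rangle=\mathbf{D}_{\rm sg}(R)$ by Lemma~\ref{lem:fr}.
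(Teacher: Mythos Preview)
Your denseness and faithfulness arguments are essentially the paper's (you use brutal truncation where the paper uses the good truncation $\tau_{\geq -n}$, but either works), and your treatment of the restriction to $\mathcal{S}^{\rm fr}$ via Lemma~\ref{lem:fr} matches the paper exactly.

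The fullness argument, however, has a genuine gap. You assert that a morphism $g\colon F(X)\to F(Y)$ in $\mathbf{D}'_{\rm sg}(R)$ ``lifts to a chain map $\tilde g\colon P_X\to P_Y$, determined up to chain homotopy and up to chain maps factoring through $\mathbf{K}^b(R\mbox{-Proj})$.'' This is the claim that the canonical map from ${\rm Hom}_{\mathbf{K}(R\text{-}{\rm Proj})}(P_X,P_Y)$, modulo the ideal of morphisms factoring through $\mathbf{K}^b(R\mbox{-Proj})$, onto ${\rm Hom}_{\mathbf{D}'_{\rm sg}(R)}(X,Y)$ is surjective. In a Verdier quotient this map is always injective, but surjectivity fails in general. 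For $R=k[x,y]/(x,y)^2$ and $X=Y=k$ one has ${\rm \underline{Hom}}_R(k,k)=k$, while ${\rm \underline{Hom}}_R(\Omega_R k,\Omega_R k)={\rm \underline{Hom}}_R(k^2,k^2)=M_2(k)$, and the map induced by $\Omega_R$ lands only in the scalar matrices. A non-scalar idempotent in $M_2(k)$ therefore represents, via $\iota_1$, a morphism $k\to k$ in $\mathbf{D}'_{\rm sg}(R)$ that cannot come from any chain map $P_k\to P_k$; there is no $\tilde g$ to which you could apply $Z^{-n}$.

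The paper handles this by working directly with a roof $M\stackrel{s}{\longleftarrow}Y\stackrel{b}{\longrightarrow}N$ representing $g$. After passing to projective resolutions one obtains chain maps $\tilde s\colon P\to P_M$ and $\tilde b\colon P\to P_N$; the key point---which replaces your unjustified lift---is that ${\rm Cone}(\tilde s)$ lies in $\mathbf{K}^b(R\mbox{-Proj})$, so $Z^{-n}(\tilde s)$ becomes an \emph{isomorphism} in $R\mbox{-\underline{Mod}}$ for $n$ large, and one may set $\underline{f}=Z^{-n}(\tilde b)\circ Z^{-n}(\tilde s)^{-1}$. The compatibility with $\delta^{(n+1)}$ (your ``main obstacle'') is then verified via the identity ${\rm pr}\circ\pi_M^{-1}=(-1)^{n+1}\,\Sigma^{n+1}(\delta^{(n+1)}_M)^{-1}$, which is essentially the inductive identification you outlined.
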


\begin{proof}
By Corollary~\ref{cor:trifun}, we have the strictly triangle functor $\tilde{F}$ above.  In what follows, we will apply Proposition~\ref{prop:equiv} to show that $\tilde{F}$ is dense, faithful and full. 

For any bounded complex $Z$, we take a projective resolution $P$ which belongs to $\mathbf{K}^{-, b}(R\mbox{-Proj})$. For $n$ large enough, its good truncation $$\tau_{\geq -n}(P)= \cdots 0\longrightarrow Z^{-n}(P) \hookrightarrow P^{-n}\stackrel{d_P^{-n}}\longrightarrow P^{-n+1} \longrightarrow \cdots$$
is quasi-isomorphic to $Z$. The projection $\tau_{\geq -n}(P)\rightarrow \Sigma^{n+1}(Z^{-n}(P))$ is an isomorphism in $\mathbf{D}'_{\rm sg}(R)$, since its mapping cone belongs to $\mathbf{K}^b(R\mbox{-Proj})$. It follows that $Z$ is isomorphic to $\Sigma^{n+1}(Z^{-n}(P))$. By Proposition~\ref{prop:equiv}(3), we infer that $\tilde{F}$ is dense. 

Assume that $f\colon M\rightarrow N$ is a homomorphism of $R$-modules satisfying $F(\underline{f})=0$ in $\mathbf{D}'_{\rm sg}(R)$. Therefore, $f$ factors through a bounded complex $P$ of projective modules. Consider the lift $\tilde{f}\colon P_M \rightarrow P_N$ of $f$ in Remark~\ref{rem:Z}(2). It follows that $\tilde{f}$ factors through $P$ in $\mathbf{K}(R\mbox{-Proj})$. For any integer $n$, we apply  the functor $Z^{-n}$ in Remark~\ref{rem:Z}(1) to $\tilde{f}$. The factorization implies that $Z^{-n}(\tilde{f})=0$ for $n$ large enough. By Remark~\ref{rem:Z}(2) again, we infer that $\Omega_R^n(\underline{f})=0$.   By Proposition~\ref{prop:equiv}(2), we infer that $\tilde{F}$ is faithful. 

The proof of fullness is slightly  more involved. Take two modules $M$ and $N$, and a morphism $g\colon M\rightarrow N$ in $\mathbf{D}'_{\rm sg}(R)$. By the very definition of the Verdier quotient, $g=b\circ s^{-1}$ is given by the following roof.
$$M \stackrel{s}\longleftarrow Y \stackrel{b}\longrightarrow N$$
Here, $Y$ is a bounded complex, both $s$ and $b$ are morphisms in $\mathbf{D}^b(R\mbox{-Mod})$ such that ${\rm Cone}(s)$ belongs to $\mathbf{K}^b(R\mbox{-Proj})$. Take a projective resolution $\pi\colon P\rightarrow Y$. The surjective homomorphism $\pi_M$ induces a quasi-isomorphism $\pi_M\colon P_M\rightarrow M$. There is a cochain map $\tilde{s}\colon P\rightarrow P_M$ such that $\pi_M\circ \tilde{s}=s\circ \pi$. Similarly, there is a cochain map $\tilde{b}\colon P\rightarrow P_N$ satisfying $\pi_N\circ \tilde{b}=b\circ \pi$. 

The cone of $\tilde{s}$ is isomorphic to a bounded complex of projective modules. It follows that 
$$Z^{-n}(\tilde{s})\colon  Z^{-n}(P)  \longrightarrow Z^{-n}(P_M)=\Omega^{n+1}_R(M)$$
is an isomorphism for $n$ sufficiently large. We fix a sufficiently large $n$. Consider the following commutative diagram, whose upper row also represents the morphism $g$ as a roof.
\[\xymatrix@C=18pt{
M & \ar[l]_-{\pi_M} \tau_{\geq -n}(P_M) \ar[d]_-{{\rm pr}} &&\ar[ll]_-{\tau_{\geq -n}(\tilde{s})} \tau_{\geq -n}(P) \ar[d]^-{\rm pr} \ar[rr]^-{\tau_{\geq -n}(\tilde{b})} && \tau_{\geq -n}(P_N) \ar[r]^-{\pi_N}\ar[d]^-{\rm pr} & N\\
& \Sigma^{n+1}\Omega_R^{n+1}(M) && \ar[ll]_-{\Sigma^{n+1}Z^{-n}(\tilde{s})} \Sigma^{n+1}Z^{-n}(P) \ar[rr]^-{\Sigma^{n+1}Z^{-n}(\tilde{b})} && \Sigma^{n+1} \Omega_R^{n+1}(N)
}\]
Here, the three ``${\rm pr}$" denote the relevant projections. Set $$\underline{f}=Z^{-n}(\tilde{b})\circ Z^{-n}(\tilde{s})^{-1}\colon \Omega_R^{n+1}(M) \longrightarrow  \Omega_R^{n+1}(N).$$ 
In this diagram, we observe that 
$${\rm pr}\circ \pi_M^{-1}=(-1)^{n+1}\; \Sigma^{n+1}(\delta^{(n+1)}_M)^{-1} \mbox{ and } \pi_N\circ {\rm pr}^{-1} =(-1)^{n+1}\; \Sigma^{n+1}(\delta^{(n+1)}_N).$$
Combining these facts, we infer that 
$$g=\Sigma^{n+1}(\delta^{(n+1)}_N) \circ \Sigma^{n+1}(F(f))\circ \Sigma^{n+1}(\delta^{(n+1)}_M)^{-1}. $$
Now, we apply Proposition~\ref{prop:equiv}(1) to infer that $\tilde{F}$ is full. 

For the restricted equivalence, we observe that $\mathcal{S}^{\rm fr}$ is a triangulated subcategory of $\mathcal{S}$. Then the result follows immediately from Lemma~\ref{lem:fr}. 
\end{proof}

\section{Singular equivalences}

In this section, we collect several known results on singular equivalences, which are obtained using the stabilization. 

Let $R$ be a left coherent ring. Then the category $R\mbox{-mod}$ of finitely presented modules is abelian. By the canonical equivalence between $\mathbf{K}^{-, b}(R\mbox{-proj})$ and $\mathbf{D}^b(R\mbox{-mod})$, the singularity category $\mathbf{D}_{\rm sg}(R)$ might be defined by $\mathbf{D}^b(R\mbox{-mod})/{\mathbf{K}^b(R\mbox{-proj})}$.

Recall that a full additive subcategory $\mathcal{X}\subseteq R\mbox{-mod}$ is called \emph{quasi-resolving} \cite{BST} if $\mathcal{X}$ contains $R\mbox{-proj}$ and is closed under kernels of epimorhisms. It follows that the corresponding full subcategory $\underline{\mathcal{X}}\subseteq R\mbox{-}\underline{\rm mod}$ is also left triangulated. A quasi-resolving subcategory $\mathcal{X}$ is \emph{ample} if for each $R$-module $M$, there exists a natural number $n$ such that $\Omega^n_R(M)$ belongs to $\mathcal{X}$. The following immediate consequence of Theorem~\ref{thm:BKVB} is due to \cite[Theorem~3.2]{BST}.

\begin{prop}\label{prop:quasi-res}
  Let $\mathcal{X}$ be a quasi-resolving subcategory. Then the functor $F$ in (\ref{equ:F-delta}) induces a fully faithful triangle functor 
  $$\tilde{F}\colon \mathcal{S}(\underline{\mathcal{X}})\longrightarrow \mathbf{D}_{\rm sg}(R).$$
  Moreover, this induced functor $\tilde{F}$ is dense if and only if $\mathcal{X}$ is ample.  
\end{prop}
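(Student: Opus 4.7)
The plan is to reduce the proposition to Theorem~\ref{thm:BKVB} by restricting along the inclusion $\underline{\mathcal{X}} \hookrightarrow R\mbox{-}\underline{\rm mod}$, where $R\mbox{-mod}^{\rm fr}=R\mbox{-mod}$ by coherence of $R$. First I would observe that, since $\mathcal{X}$ contains $R\mbox{-proj}$ and is closed under kernels of epimorphisms, each short exact sequence $0\to \Omega_R(M)\to P(M)\to M\to 0$ with $M\in \mathcal{X}$ forces $\Omega_R(M)\in \mathcal{X}$; hence $\Omega_R$ restricts to an endofunctor on $\underline{\mathcal{X}}$, and the restriction $F|_{\underline{\mathcal{X}}}\colon \underline{\mathcal{X}}\to \mathbf{D}_{\rm sg}(R)$ is a triangle functor of left triangulated categories. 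Corollary~\ref{cor:trifun} then produces the required strictly triangulated $\tilde{F}\colon \mathcal{S}(\underline{\mathcal{X}})\to \mathbf{D}_{\rm sg}(R)$, factoring as the canonical $\mathcal{S}(\underline{\mathcal{X}})\to \mathcal{S}^{\rm fr}$ followed by the equivalence of Theorem~\ref{thm:BKVB}.

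For fullness and faithfulness, I would check the criteria of Proposition~\ref{prop:equiv}(1) and (2) and reduce them directly to the corresponding statements for the ambient equivalence $\mathcal{S}^{\rm fr}\stackrel{\sim}\to \mathbf{D}_{\rm sg}(R)$. In both items, the witnessing morphism (or equality of morphisms) produced by Theorem~\ref{thm:BKVB} lives between syzygies $\Omega_R^i(X), \Omega_R^i(Y)$ of objects in $\mathcal{X}$; since $\Omega_R$ preserves $\mathcal{X}$ and $\mathcal{X}$ is a full subcategory of $R\mbox{-mod}$, the witness is automatically valid in $\underline{\mathcal{X}}$.

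For the final ``if and only if'', I would apply Proposition~\ref{prop:equiv}(3). For the forward direction: given $Z\in \mathbf{D}_{\rm sg}(R)$, the density step in the proof of Theorem~\ref{thm:BKVB} writes $Z\simeq \Sigma^{n+1}F(N)$ for some module $N$ and large $n$; ampleness gives $m$ with $\Omega_R^m(N)\in \mathcal{X}$, and combining with the natural isomorphism $F\Omega_R^m\simeq \Sigma^{-m}F$ produced by $\delta^{(m)}$ shows $Z\simeq \Sigma^{n+1+m}F(\Omega_R^m(N))$, as required. For the converse: given a module $M$, applying density to $F(M)$ and using $F\Omega_R^i\simeq \Sigma^{-i}F$ gives $X\in \mathcal{X}$ and $i\geq 0$ with $F(\Omega_R^i(M))\simeq F(X)$ in $\mathbf{D}_{\rm sg}(R)$; the equivalence of Theorem~\ref{thm:BKVB}, together with the colimit description of morphisms in $\mathcal{S}^{\rm fr}$, then yields a stable isomorphism $\Omega_R^{i+p}(M)\simeq \Omega_R^p(X)$ in $R\mbox{-}\underline{\rm mod}$ for some $p\geq 0$. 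The main obstacle I anticipate is upgrading this \emph{stable} isomorphism to actual membership in $\mathcal{X}$, since $\mathcal{X}$ is not assumed closed under direct summands. I would handle it by writing the stable isomorphism as $\Omega_R^{i+p}(M)\oplus Q_1\simeq \Omega_R^p(X)\oplus Q_2$ in $R\mbox{-mod}$ with $Q_1,Q_2$ projective, so that the left-hand side lies in $\mathcal{X}$, and then taking one further syzygy via the split epimorphism $P(\Omega_R^{i+p}(M))\oplus Q_1 \twoheadrightarrow \Omega_R^{i+p}(M)\oplus Q_1$: its kernel is $\Omega_R^{i+p+1}(M)$, which then lies in $\mathcal{X}$ by closure under kernels of epimorphisms.
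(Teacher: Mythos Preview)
Your proof is correct. For the fully faithful part, your approach coincides with the paper's: both observe that $\mathcal{S}(\underline{\mathcal{X}})$ embeds as a triangulated subcategory of $\mathcal{S}(R\mbox{-}\underline{\rm mod})$ and invoke Theorem~\ref{thm:BKVB}; you spell this out through Proposition~\ref{prop:equiv}(1)--(2), the paper compresses it to one sentence. For the ``dense if and only if ample'' clause the paper simply cites \cite[Corollary~2.3]{Chen18}, whereas you supply a direct argument via Proposition~\ref{prop:equiv}(3). In the converse direction you correctly isolate the only genuine subtlety---that a stable isomorphism $\Omega_R^{i+p}(M)\simeq \Omega_R^p(X)$ only yields an honest isomorphism after adjoining projective summands, while $\mathcal{X}$ is not assumed summand-closed---and your one-further-syzygy trick resolves it cleanly. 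The paper buys brevity by outsourcing this step; your version is self-contained.
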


\begin{proof}
    We observe that $\mathcal{S}(\underline{\mathcal{X}})$ is a triangulated subcategory of $\mathcal{S}(R\mbox{-}\underline{\rm mod})$. Then the first statement follows from Theorem~\ref{thm:BKVB}. The last statement follows from \cite[Corollary~2.3]{Chen18}. In this situation,  the inclusion  $\underline{\mathcal{X}}\subseteq R\mbox{-}\underline{\rm mod}$ is a pre-triangle equivalence in the sense of Definition~\ref{defn:pte}.
\end{proof}

Recall that a unbounded complex $P$ of finitely generated projective $R$-modules is \emph{totally acyclic} if it is acyclic and its dual complex ${\rm Hom}_R(P,R)$ is also acyclic. An $R$-module $G$ is \emph{Gorenstein-projective} \cite{EJ} if there is a totally acyclic complex $P$ with $Z^{1}(P)\simeq G$; compare \cite{ABr}. Denote by $R\mbox{-Gproj}$ the full subcategory of Gorenstein-projective modules; it is a quasi-resolving subcategory.  Since Gorenstein-projective modules are closed under extensions, it becomes an exact category in the sense of Quillen. Moreover, it is Frobenius whose projective-injective objects are precisely finitely generated projective $R$-modules. It follows from \cite[Theorem~I.2.6]{Hap} that the stable category $R\mbox{-}\underline{\rm Gproj}$ is naturally triangulated.  

Recall that the ring $R$ is \emph{left $G$-regular} provided that the subcategory $R\mbox{-Gproj}$  is ample. Examples are Gorenstein rings, that is, two-sided noetherian rings with finite selfinjective dimension on both sides.

The following well-known result is due to \cite[Theorem~4.4]{Buc}; compare \cite[Theorem~4.6]{Hap91} and \cite[Theorem~6.9]{Bel}. We refer to \cite[Section~4]{CW} for another proof. The equivalence (\ref{equ:Ric}) below is also due to \cite[Theorem~2.1]{Ric89}.

\begin{thm}{\rm (Buchweitz)}\label{thm:Buc}
Let $R$ be a left coherent ring. Then    the functor $F$ in (\ref{equ:F-delta}) restricts to  a fully faithful triangle functor 
  $$F\colon R\mbox{-}\underline{\rm Gproj} \longrightarrow \mathbf{D}_{\rm sg}(R).$$
  Moreover, this restricted functor is dense if and only if $R$ is left $G$-regular. Consequently, if $R$ is quasi-Frobenius, the restricted functor yields a triangle equivalence
  \begin{align}\label{equ:Ric}
      F\colon R\mbox{-}\underline{\rm mod} \stackrel{\sim}\longrightarrow \mathbf{D}_{\rm sg}(R).
  \end{align}
\end{thm}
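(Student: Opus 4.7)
The plan is to deduce the theorem from Proposition~\ref{prop:quasi-res} applied to $\mathcal{X}=R\mbox{-Gproj}$, exploiting the key additional feature that on $R\mbox{-}\underline{\rm Gproj}$ the syzygy $\Omega_R$ is already an autoequivalence, so the stabilization of this category collapses onto itself. Concretely, I would first verify that $R\mbox{-Gproj}$ is a quasi-resolving subcategory: it contains $R\mbox{-proj}$ trivially, and closure under kernels of epimorphisms follows by the standard splicing argument on totally acyclic complexes. Proposition~\ref{prop:quasi-res} then supplies a fully faithful triangle functor $\tilde{F}\colon \mathcal{S}(R\mbox{-}\underline{\rm Gproj})\to \mathbf{D}_{\rm sg}(R)$, which is dense precisely when $R$ is left $G$-regular.

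Next I would show that the stabilization functor $\mathbf{S}\colon R\mbox{-}\underline{\rm Gproj}\to \mathcal{S}(R\mbox{-}\underline{\rm Gproj})$ is itself a triangle equivalence. Since $R\mbox{-Gproj}$ is a Frobenius exact category whose projective-injective objects are precisely the finitely generated projectives, the discussion preceding the theorem makes $R\mbox{-}\underline{\rm Gproj}$ into a (strictly) stable triangulated category with shift $\Omega_R^{-1}$; explicitly, for each Gorenstein-projective $G$ the totally acyclic complex realising $G$ provides a cosyzygy $\Omega_R^{-1}(G)$. In particular $\Omega_R$ is an autoequivalence, so applying the three clauses of Proposition~\ref{prop:equiv} to the identity looped functor $(R\mbox{-}\underline{\rm Gproj},\Omega_R)\to(R\mbox{-}\underline{\rm Gproj},\Omega_R)$ immediately shows that $\mathbf{S}$ is full, faithful and dense. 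Composing with $\tilde{F}$, and using $\tilde{F}\circ \mathbf{S}=F$ from Corollary~\ref{cor:trifun}, delivers the first two assertions of the theorem.

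For the quasi-Frobenius case, $R$ is two-sided self-injective and noetherian, hence every projective module is also injective; consequently any deleted projective resolution is automatically totally acyclic, and therefore every $R$-module is Gorenstein-projective. Thus $R\mbox{-mod}=R\mbox{-Gproj}$ and ampleness is automatic, yielding the equivalence~(\ref{equ:Ric}).

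The step I expect to require the most care is the compatibility between the triangulated structure on $R\mbox{-}\underline{\rm Gproj}$ inherited via Happel from the Frobenius exact structure and the left triangulated structure induced by restriction from $R\mbox{-}\underline{\rm Mod}$: only once their canonical left-triangles are matched up does the identification $\tilde{F}\circ \mathbf{S}=F$ become honest and does Proposition~\ref{prop:quasi-res} genuinely apply to $\underline{\mathcal{X}}=R\mbox{-}\underline{\rm Gproj}$. Granted this compatibility, the remaining work is a formal application of the stabilization machinery built up in Sections~2 and~3.
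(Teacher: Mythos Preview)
Your proposal is correct and follows essentially the same route as the paper: apply Proposition~\ref{prop:quasi-res} to $\mathcal{X}=R\mbox{-Gproj}$, then use that $\Omega_R$ is an autoequivalence on $R\mbox{-}\underline{\rm Gproj}$ to identify this category with its own stabilization, and finally note $R\mbox{-Gproj}=R\mbox{-mod}$ in the quasi-Frobenius case. The paper compresses your second step into the single sentence ``since $R\mbox{-}\underline{\rm Gproj}$ is already triangulated, we identify it with its stabilization,'' whereas you spell it out via Proposition~\ref{prop:equiv}; your explicit flag about compatibility of the Happel triangulation with the restricted left-triangulation is a genuine point the paper leaves implicit. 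One small slip: in the quasi-Frobenius paragraph, a deleted projective resolution is only half of what you need --- the totally acyclic complex comes from splicing a projective resolution with a projective (=injective) coresolution, and acyclicity of the dual then follows from self-injectivity of $R$.
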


\begin{proof}
    Since  $R\mbox{-}\underline{\rm Gproj}$ is already triangulated, we identify it with its stabilization $\mathcal{S}(R\mbox{-}\underline{\rm Gproj})$. Then the results follow from Proposition~\ref{prop:quasi-res}. For the last statement, we just observe that $R\mbox{-Gproj}=R\mbox{-mod}$ for a quasi-Frobenius ring $R$. 
\end{proof}

In general, the restricted functor $F$ above is not dense. Following \cite{BOJ}, the Verdier quotient category $\mathbf{D}_{\rm sg}(R)/{{\rm Im}\; F}=\mathbf{D}_{\rm def}(R)$ is called  the \emph{Gorenstein defect category} of $R$, which detects the Gorensteinness of the ring;  compare \cite[Theorem~4.2]{BOJ}.

The following example plays a central role in this section.

\begin{exm}
Let $A$ be a left artinian ring whose Jacobson radical is square zero, that is, ${\bf r}^2={\rm rad}(A)^2=0$. Denote by $A\mbox{-}{\rm spmod}={\rm add}(A/{\bf r}\oplus A)\subseteq A\mbox{-}{\rm mod}$. It is an ample quasi-resolving subcategory; see \cite[Section~3]{Chen11}. Consequently, we have a triangle equivalence
    \begin{align}\label{tri:radical}
        \mathcal{S}(A\mbox{-}\underline{\rm spmod}) \stackrel{\sim}\longrightarrow \mathbf{D}_{\rm sg}(A).
    \end{align}
    Since $A\mbox{-}\underline{\rm spmod}$  is semisimple, it follows from Example~\ref{exm:ss} that $\mathbf{D}_{\rm sg}(A)$ is also semisimple.
    
By identifying $A/{\bf r}$-modules with semisimple $A$-modules, we have an obvious   functor $A/{\bf r}\mbox{-}{\rm mod}\rightarrow A\mbox{-}\underline{\rm spmod}$, which is full and dense. By \cite[Lemma~3.1 and the proof of Proposition~3.2]{Chen11}, it gives rise to a pre-triangle equivalence between the two semisimple left triangulated categories $(A/{\bf r}\mbox{-}{\rm mod}, \mathbf{r}\otimes_{A/{\bf r}}-)$ and $(A\mbox{-}\underline{\rm spmod}, \Omega_A)$. Consequently, we have a triangle equivalence
\begin{align}\label{tri:radical2}
 \mathcal{S}(A/{\bf r}\mbox{-}{\rm mod}, {\bf r}\otimes_{A/{\bf r}}-) \stackrel{\sim}\longrightarrow \mathbf{D}_{\rm sg}(A).
\end{align}    
\end{exm}

Let us mention examples of rings with radical square zero. Let $\mathbb{K}$ be a field. Assume that $D$ is a finite dimensional semisimple $\mathbb{K}$-algebra and that $M$ is a finitely generated $D$-$D$-bimodule, on which $\mathbb{K}$ acts centrally. Then  the trivial extension algebra $B=D\ltimes M$ is such an example. 

Let $Q=(Q_0, Q_1; s,t)$ be a finite quiver. Here,  $Q_0$ is the set of vertices, $Q_1$  is the set of arrows and for each arrow $\alpha$, $s(\alpha)$ and $t(\alpha)$ denote its starting vertex and terminating vertex, respectively. Denote by $\mathbb{K}Q$ the path algebra. The trivial extension algebra $\mathbb{K}Q_0\ltimes \mathbb{K}Q_1$ is identified with the quotient algebra $\mathbb{K}Q/{J^2}$, where $J$ is the two-sided ideal generated by arrows.

\begin{exm}
    Let $R$ be a commutative local Cohen-Macaulay ring, and $\Lambda$ an \emph{$R$-order}, that is, $\Lambda$ is an $R$-algebra whose underlying $R$-module is finitely generated and maximal Cohen-Macaulay, for example, free of finite rank. An $\Lambda$-module is defined to be \emph{CM} if the underlying $R$-module is maximal Cohen-Macaulay. Denote by ${\rm CM}(\Lambda)$  the full subcategory formed by CM $\Lambda$-modules. It is an ample quasi-resolving subcategory of $\Lambda\mbox{-}{\rm mod}$; compare \cite[Proposition~4.3]{BST}. Consequently, we have a triangle equivalence
\begin{align}\label{tri:CM}
        \mathcal{S}(\underline{\rm CM}(\Lambda)) \stackrel{\sim}\longrightarrow \mathbf{D}_{\rm sg}(\Lambda).
    \end{align}    
\end{exm}

    Recall that a \emph{derived equivalence} \cite{Ric} between $R$ and $S$ means a triangle equivalence $\mathbf{D}(R\mbox{-Mod})\rightarrow \mathbf{D}(S\mbox{-Mod})$ between the unbounded derived category. It is an important subject in representation theory \cite{Rou}.  Analogously, a \emph{singular equivalence} between two left coherent rings $R$ and $S$ is defined to be a triangle equivalence between  $\mathbf{D}_{\rm sg}(R)$ and $\mathbf{D}_{\rm sg}(S)$.     In view of \cite[Lemma~5.4]{Kr}, any derived equivalence between $R$ and $S$ induces a singular equivalence. 

    \begin{rem}
    (1) Assume that $F\colon \mathbf{D}^b(R\mbox{-mod})\rightarrow \mathbf{D}^b(S\mbox{-mod})$ is a triangle equivalence for two left coherent rings $R$ and $S$. By \cite[Proposition~3.6]{JKS},  $F$ necessarily sends $\mathbf{K}^b(R\mbox{-proj})$ to $\mathbf{K}^b(S\mbox{-proj})$; compare \cite[Proposition~8.2]{Ric} and \cite[Theorem~5.1]{CNS}. It follows that $F$ induces a singular equivalence between $R$ and $S$.
    
    (2) In view of (\ref{equ:Ric}), we conclude that singular equivalences between rings are natural extensions of \emph{stable equivalences} \cite{Ric89, Broue} between quasi-Frobenius rings.
    \end{rem}

Let $B=\mathbb{K}Q/I$ be a quadratic monomial algebra, for example, a gentle algebra. Here, $Q$ is a finite quiver, and $I\subseteqq \mathbb{K}Q$ is an admissible ideal which is generated by some paths of length two. The \emph{relation quiver} $Q^r$ of $B$ is defined as follows. The set $Q^r_0$ of vertices is defined  to be $Q_1$. For each relation $\beta\alpha$ in $I$, we set an arrow $[\beta\alpha]$ from $\alpha$ to $\beta$.   In other words, $Q^r_1$ is given by the set of  generators in $I$. The following result is due to \cite[Theorem~4.5]{Chen18}; see also \cite[Theorem~6.2]{CLW23}.

\begin{thm}
    Let $B=\mathbb{K}Q/I$ be a quadratic monomial algebra. Then there is a singular equivalence between $B$ and $\mathbb{K}Q^r/{J^2}$. 
\end{thm}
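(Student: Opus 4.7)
The plan is to apply Proposition~\ref{prop:quasi-res} on the side of $B$ and the radical-square-zero description (\ref{tri:radical2}) on the side of $C=\mathbb{K}Q^r/J^2$, thereby reducing both singularity categories to stabilizations of semisimple looped categories indexed by the common set $Q_1$, and then to match the two loop endofunctors via the combinatorics that defines $Q^r$.

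First I would unpack (\ref{tri:radical2}) for $C$: since its Jacobson radical is square zero, it provides a triangle equivalence $\mathcal{S}(\mathbb{K}Q^r_0\mbox{-}{\rm mod},\ \mathbf{r}_C\otimes_{\mathbb{K}Q^r_0}-)\stackrel{\sim}{\longrightarrow}\mathbf{D}_{\rm sg}(C)$. The source is the stabilization of a semisimple category whose simple objects $\{S_\alpha\}_{\alpha\in Q_1}$ are indexed by $Q^r_0=Q_1$, and by the very construction of $Q^r$ the loop sends $S_\alpha$ to $\bigoplus_{\beta\in Q_1,\ \beta\alpha\in I} S_\beta$.

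Next I would treat $B=\mathbb{K}Q/I$ in parallel. Consider the full additive subcategory $\mathcal{X}\subseteq B\mbox{-}{\rm mod}$ generated by $B$ together with the arrow submodules $M_\alpha=B\bar\alpha$, one for each $\alpha\in Q_1$. Because $I$ is quadratic monomial, a direct projective-cover calculation yields $\Omega_B(M_\alpha)\simeq\bigoplus_{\beta\in Q_1,\ \beta\alpha\in I}M_\beta$ in $B\mbox{-}\underline{\rm mod}$. This formula shows at once that $\mathcal{X}$ is quasi-resolving, and a standard description of arbitrary syzygies in a monomial algebra gives ampleness. A routine check identifies $\underline{\mathcal{X}}$ with a semisimple additive category indexed by $Q_1$ in the sense of Example~\ref{exm:ss}, and Proposition~\ref{prop:quasi-res} delivers a triangle equivalence $\mathcal{S}(\underline{\mathcal{X}})\stackrel{\sim}{\longrightarrow}\mathbf{D}_{\rm sg}(B)$.

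Finally, the assignment $\alpha\mapsto M_\alpha$ extends to an additive equivalence $\mathbb{K}Q^r_0\mbox{-}{\rm mod}\stackrel{\sim}{\to} \underline{\mathcal{X}}$, and the two syzygy formulas above show that it intertwines $\mathbf{r}_C\otimes_{\mathbb{K}Q^r_0}-$ with $\Omega_B$. By Example~\ref{exm:ss} both sides carry a unique left-triangulated structure, so this data is automatically a pre-triangle equivalence in the sense of Definition~\ref{defn:pte}; the induced strictly triangle functor on stabilizations is therefore an equivalence. Composing with the two reductions above yields the desired singular equivalence $\mathbf{D}_{\rm sg}(B)\simeq \mathbf{D}_{\rm sg}(\mathbb{K}Q^r/J^2)$. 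The main obstacle is the combinatorial core: proving the syzygy formula $\Omega_B(M_\alpha)\simeq\bigoplus_{\beta\alpha\in I}M_\beta$, the ampleness of $\mathcal{X}$, and the fact that the $M_\alpha$ yield a semisimple stable subcategory indexed faithfully by $Q_1$. Each of these uses the quadratic monomial hypothesis essentially, and once they are in hand the rest follows automatically from the machinery of Sections~2--4.
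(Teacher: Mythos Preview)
Your core strategy coincides with the paper's: both rest on the functor $S_\alpha\mapsto B\alpha$, the syzygy formula $\Omega_B(B\alpha)\simeq\bigoplus_{\beta\alpha\in I}B\beta$, and ampleness. The paper, however, does \emph{not} route the argument through an intermediate subcategory $\mathcal{X}\subseteq B\mbox{-}{\rm mod}$. It defines $H\colon \mathbb{K}Q^r/J^2\mbox{-}\underline{\rm spmod}\to B\mbox{-}\underline{\rm mod}$ with target the \emph{whole} stable module category, and then proves (this is \cite[Lemma~4.6]{Chen18}) that $(H,\delta)$ is a pre-triangle equivalence in the sense of Definition~\ref{defn:pte}. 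Combining with (\ref{tri:radical}) on the source and Proposition~\ref{prop:quasi-res} applied to $\mathcal{X}=B\mbox{-}{\rm mod}$ on the target gives the result.

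The reason this matters is that two of your intermediate claims are false as stated. First, $\underline{\mathcal{X}}$ is \emph{not} in general a semisimple category indexed faithfully by $Q_1$: whenever no relation in $I$ begins with $\alpha$ (i.e.\ $\alpha$ is a sink of $Q^r$) one has $B\alpha\cong Be_{t(\alpha)}$, so $M_\alpha$ is projective and vanishes in $\underline{\mathcal{X}}$; and if $t(\alpha)=t(\alpha')$ with $\{\beta:\beta\alpha\in I\}=\{\beta:\beta\alpha'\in I\}$ then $M_\alpha\cong M_{\alpha'}$ even though $S_\alpha\not\cong S_{\alpha'}$. Hence the assignment $\alpha\mapsto M_\alpha$ is typically neither faithful nor essentially injective, and you cannot simply invoke Example~\ref{exm:ss} to conclude it is a pre-triangle equivalence. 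Second, the syzygy formula only controls kernels of projective covers, not kernels of arbitrary epimorphisms in $\mathcal{X}$, so it does not by itself yield the quasi-resolving condition needed for Proposition~\ref{prop:quasi-res}. Both difficulties disappear once one works, as the paper does, with target $B\mbox{-}\underline{\rm mod}$ and checks the pre-triangle equivalence criteria of Proposition~\ref{prop:equiv} directly: at the stabilization level the collapses among the $M_\alpha$ and among the $S_\alpha$ match up exactly, and no structural hypotheses on an auxiliary $\mathcal{X}$ are needed.
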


\begin{proof}
Consider the natural $\mathbb{K}$-linear functor 
$$H\colon \mathbb{K}Q^r/{J^2}\mbox{-\underline{spmod}}\longrightarrow B\mbox{-}{\underline{\rm mod}},$$ 
which sends the simple module $S_\alpha$ corresponding to the vertex $\alpha$ in $Q^r$ to the left ideal $B\alpha$ of $B$.  The key ingredient is to show that $H$ is a part of  a pre-triangle equivalence $(H, \delta)$; see \cite[Lemma~4.6]{Chen18}. Then using (\ref{tri:radical}) and Proposition~\ref{prop:quasi-res}, we obtain the required singular equivalence. 
\end{proof}

Consider the power series algebra $\mathbb{C}[[x_1, x_2, x_3]]$ over the complex numbers. It has an algebra automorphism $\sigma$ which sends  $x_i$ to $-x_i$. Denote by $\mathbb{C}[[x_1, x_2, x_3]]^\sigma$ the invariant subalgebra. The following result is due to \cite[Section~3]{Kalck}, which is based on \cite[Part II, Theorem~4.1]{AR}.

\begin{thm}
    There is a singular equivalence between $\mathbb{C}[[x_1, x_2, x_3]]^\sigma$ and $\mathbb{C}K_3/{J^2}$, where $K_3$ is the quiver consisting of a single vertex and three loops. 
\end{thm}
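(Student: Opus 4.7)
The plan is to match both singularity categories with a single common model, namely the stabilization of the semisimple looped category $(\mathbb{C}\mbox{-}{\rm mod},\,V\otimes_{\mathbb{C}}-)$ with $V=\mathbb{C}^{3}$, and then concatenate. On the $\mathbb{C}K_{3}/J^{2}$ side this match is immediate: $A=\mathbb{C}K_{3}/J^{2}$ is a radical-square-zero $\mathbb{C}$-algebra with $A/\mathbf{r}\simeq \mathbb{C}$ and $\mathbf{r}\simeq V$ as $\mathbb{C}$-$\mathbb{C}$-bimodules, so the triangle equivalence (\ref{tri:radical2}) gives
$$\mathcal{S}(\mathbb{C}\mbox{-}{\rm mod},\,V\otimes_{\mathbb{C}}-)\;\stackrel{\sim}{\longrightarrow}\;\mathbf{D}_{\rm sg}(A).$$

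On the $R$ side, where $R=\mathbb{C}[[x_{1},x_{2},x_{3}]]^{\sigma}$ is a three-dimensional complete local Cohen--Macaulay ring, ${\rm CM}(R)$ is an ample quasi-resolving subcategory of $R\mbox{-}{\rm mod}$, so the equivalence (\ref{tri:CM}), applied to the $R$-order $\Lambda=R$, gives
$$\mathcal{S}(\underline{{\rm CM}}(R),\,\Omega_{R})\;\stackrel{\sim}{\longrightarrow}\;\mathbf{D}_{\rm sg}(R).$$
The only non-formal input is the representation-theoretic result [AR, Part~II, Theorem~4.1]: it classifies the indecomposable MCM $R$-modules, telling us that up to isomorphism there is a unique non-free such module $N$ (coming from the $\sigma$-antiinvariant part of $\mathbb{C}[[x_{1},x_{2},x_{3}]]$) with $\underline{\rm End}_{R}(N)\simeq \mathbb{C}$, and that there is a stable isomorphism $\Omega_{R}(N)\simeq N^{\oplus 3}$ in $\underline{{\rm CM}}(R)$. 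Consequently, $\underline{{\rm CM}}(R)$ is a semisimple additive category in the sense of Example~\ref{exm:ss}, whose unique nontrivial indecomposable is sent by $\Omega_{R}$ to its own three-fold direct sum.

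It then remains to produce a pre-triangle equivalence $(H,\delta)$ between the looped categories $(\mathbb{C}\mbox{-}{\rm mod},\,V\otimes_{\mathbb{C}}-)$ and $(\underline{{\rm CM}}(R),\,\Omega_{R})$. Set $H(W)=N^{\oplus \dim W}$, extended $\mathbb{C}$-linearly on morphisms via $\underline{\rm End}_{R}(N)\simeq \mathbb{C}$, and let $\delta\colon H(V\otimes_{\mathbb{C}}-)\xrightarrow{\sim}\Omega_{R}\circ H$ be induced component-wise by the chosen isomorphism $\Omega_{R}(N)\simeq N^{\oplus 3}$. Since both looped categories are semisimple, each carries the unique trivial left triangulated structure of Example~\ref{exm:ss}, so $(H,\delta)$ is automatically a triangle functor; it is evidently an equivalence of additive categories, hence a pre-triangle equivalence in the sense of Definition~\ref{defn:pte}. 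Stabilizing $(H,\delta)$ and composing with the two equivalences above produces the desired singular equivalence. The main obstacle is thus not categorical but representation-theoretic: everything reduces, through the machinery of the previous sections, to the Auslander--Reiten identification of $N$ together with the numerical fact $\Omega_{R}(N)\simeq N^{\oplus 3}$, which geometrically matches the three loops of $K_{3}$.
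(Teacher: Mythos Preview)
Your argument hinges on the claim that $R=\mathbb{C}[[x_1,x_2,x_3]]^\sigma$ has a \emph{unique} non-free indecomposable maximal Cohen--Macaulay module $N$, so that $\underline{\rm CM}(R)$ is semisimple and the naive functor $\mathbb{C}\mapsto N$ is an equivalence of additive categories. This is false: $R$ has infinite Cohen--Macaulay type, and already $\Omega_R(N)$ is a new indecomposable. Indeed, $N$ is minimally generated by $x_1,x_2,x_3$, so $\Omega_R(N)$ has rank $3-1=2$; under the correspondence between MCM $R$-modules and ACM sheaves on $(\mathbb{P}^2,\mathcal{O}(2))$, the twisted Euler sequence $0\to\Omega^1_{\mathbb{P}^2}(1)\to\mathcal{O}^3\to\mathcal{O}(1)\to 0$ identifies $\Omega_R(N)$ with the module attached to the indecomposable rank-$2$ bundle $\Omega^1_{\mathbb{P}^2}(1)$. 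Your asserted isomorphism $\Omega_R(N)\simeq N^{\oplus 3}$ therefore fails even stably: by Krull--Schmidt over the complete local ring $R$ it would force $R$ to occur as a direct summand of $N^{\oplus 3}$. Consequently the looped functor $(H,\delta)$ you describe does not exist, and \cite[Part~II, Theorem~4.1]{AR} does not say what you attribute to it.

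The paper's proof differs in two essential respects. First, the simple is sent not to $\omega\ (=N)$ but to its syzygy $\Omega_R(\omega)$; it is for \emph{this} module that the looped-functor condition $\Omega_R(\Omega_R(\omega))\simeq \Omega_R(\omega)^{\oplus 3}$ holds in $\underline{\rm CM}(R)$. Second, and more importantly, the resulting $H\colon \mathbb{C}K_3/J^2\mbox{-}\underline{\rm spmod}\to \underline{\rm CM}(R)$ is \emph{not} an equivalence of additive categories --- its target is far from semisimple --- but only a pre-triangle equivalence in the sense of Definition~\ref{defn:pte}. The substantive content of \cite[Propositions~3.1 and~3.2]{Kalck}, building on \cite{AR}, is precisely the verification of the conditions in Proposition~\ref{prop:equiv}: roughly, that $\underline{\rm End}_R(\Omega_R(\omega))\simeq\mathbb{C}$ and that sufficiently high syzygies of every MCM module land in ${\rm add}(\Omega_R(\omega))$. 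This asymptotic statement is what makes $\mathcal{S}(H)$ an equivalence even though $H$ itself is very far from one; it cannot be replaced by the finite-type claim you invoke.
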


\begin{proof}
The algebra $R=\mathbb{C}[[x_1, x_2, x_3]]^\sigma$ is Cohen-Macaulay. There is a $\mathbb{C}$-linear functor 
$$H\colon \mathbb{C}K_3/{J^2}\mbox{-}\underline{\rm spmod}\longrightarrow  \underline{\rm CM}(R),$$
which sends the unique simple module $\mathbb{C}$ over $\mathbb{C}K_3/{J^2}$  to $\Omega_R(\omega)$, the syzygy of the canonical module $\omega$ of $R$. By \cite[Propositions~3.1 and 3.2]{Kalck}, the functor $H$ is a part of a pre-triangle equivalence $(H, \delta)$. In view of (\ref{tri:radical}) and (\ref{tri:CM}), we infer the required singular equivalence.
\end{proof}

Let $\mathcal{O}$ be a complete discrete valuation ring with $\mathcal{O}/{{\rm rad}(\mathcal{O})}=\mathbb{K}$. Denote by $K$ the fraction field of $\mathcal{O}$. An $\mathcal{O}$-order $\Lambda$ is called a \emph{B\"{a}ckstr\"{o}m order}  \cite{RR} provided that $K\otimes_\mathcal{O} \Lambda$ is semisimple and that there is an overorder $\Gamma$ of $\Lambda$ in $ K\otimes_\mathcal{O} \Lambda$, which is hereditary with ${\rm rad}(\Lambda)={\rm rad}(\Gamma)$.  The following result is due to \cite[Theorem~4.4 and Corollary~4.5]{Wei}.

\begin{thm}
    Let $\Lambda$ be a B\"{a}ckstr\"{o}m order. Then there is a singular equivalence between $\Lambda$ and $D\ltimes M$ for some finite dimensional semisimple $\mathbb{K}$-algebra $D$ and a $D$-$D$-bimodule $M$. 
\end{thm}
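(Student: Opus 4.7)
The plan is to reduce both $\mathbf{D}_{\rm sg}(\Lambda)$ and $\mathbf{D}_{\rm sg}(D\ltimes M)$ to stabilizations of a common semisimple left triangulated category, for a suitable pair $(D,M)$ built from $(\Lambda,\Gamma)$. The guiding principle is that the B\"{a}ckstr\"{o}m condition ${\rm rad}(\Lambda)={\rm rad}(\Gamma)$ forces the relevant stable category of $\Lambda$-lattices to collapse to something semisimple, after which Example~\ref{exm:ss} and the equivalence (\ref{tri:radical2}) deliver the answer.

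First, I would apply (\ref{tri:CM}) together with Proposition~\ref{prop:quasi-res} to the subcategory $\mathcal{Y}={\rm add}(\Lambda\oplus\Gamma)\subseteq {\rm CM}(\Lambda)$. It trivially contains $\Lambda\mbox{-proj}$; that it is closed under kernels of epimorphisms and is ample should follow from the B\"{a}ckstr\"{o}m property. The key observation is that ${\rm rad}(\Lambda)={\rm rad}(\Gamma)$ is a two-sided ideal of $\Gamma$, so $\Lambda$-submodules of $\Lambda$-projectives that lie inside the radical are automatically $\Gamma$-stable, hence $\Gamma$-projective by hereditariness of $\Gamma$; this forces sufficiently high syzygies of every CM $\Lambda$-module into ${\rm add}(\Gamma)\subseteq \mathcal{Y}$. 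Proposition~\ref{prop:quasi-res} therefore yields a triangle equivalence $\mathcal{S}(\underline{\mathcal{Y}})\simeq \mathbf{D}_{\rm sg}(\Lambda)$.

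Second, I would verify that $\underline{\mathcal{Y}}$ is semisimple as an additive category and read off the pair $(D,M)$ from it. Since ${\rm rad}(\Lambda)={\rm rad}(\Gamma)$, the stable Hom-spaces between summands of $\Gamma$ are controlled by the semisimple $\mathbb{K}$-algebra $\Gamma/{\rm rad}(\Gamma)$, yielding semisimplicity. Setting $D:=\underline{\rm End}_\Lambda(\Gamma)^{\rm op}$, the functor $\underline{{\rm Hom}}_\Lambda(\Gamma,-)$ provides an additive equivalence $\underline{\mathcal{Y}}\simeq D\mbox{-mod}$. By Example~\ref{exm:ss}, the left triangulated structure on this semisimple category is uniquely determined by the underlying additive endofunctor $\Omega_\Lambda$, which under the above identification takes the form $M\otimes_D-$ for the $D$-$D$-bimodule $M$ corresponding to $\Omega_\Lambda(\Gamma)$.

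Finally, applying (\ref{tri:radical2}) to the trivial extension $A=D\ltimes M$, whose Jacobson radical $0\ltimes M$ visibly squares to zero, one obtains
\[\mathbf{D}_{\rm sg}(D\ltimes M)\;\simeq\; \mathcal{S}(D\mbox{-mod},\,M\otimes_D-)\;\simeq\; \mathcal{S}(\underline{\mathcal{Y}})\;\simeq\; \mathbf{D}_{\rm sg}(\Lambda),\]
as required. The main obstacle I anticipate lies in the second step: rigorously pinning down the $D$-$D$-bimodule structure on $\Omega_\Lambda(\Gamma)$ and showing that the restricted syzygy endofunctor genuinely factors as tensor product with this bimodule. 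This is precisely where the equality ${\rm rad}(\Lambda)={\rm rad}(\Gamma)$ and the structure theory of hereditary $\mathcal{O}$-orders must be exploited most carefully.
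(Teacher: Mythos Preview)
Your overall architecture matches the paper's: both sides are reduced to the stabilization of the semisimple looped category $(D\mbox{-mod},\,M\otimes_D-)$ with $D=\underline{\rm End}_\Lambda(\Gamma)$ and $M=\underline{\rm Hom}_\Lambda(\Gamma,\Omega_\Lambda(\Gamma))$, after which (\ref{tri:radical2}) handles $D\ltimes M$ and (\ref{tri:CM}) handles $\Lambda$. The paper packages the $\Lambda$-side slightly differently from you: rather than invoking Proposition~\ref{prop:quasi-res} for $\mathcal{Y}={\rm add}(\Lambda\oplus\Gamma)$, it directly builds the additive functor $H\colon D\mbox{-mod}\to\underline{\rm CM}(\Lambda)$, $D\mapsto\Gamma$, and cites \cite[Proposition~4.1 and (4.3)]{Wei} for the fact that $(H,\delta)$ is a pre-triangle equivalence. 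This sidesteps having to check that $\mathcal{Y}$ is closed under kernels of epimorphisms, which you assert but do not justify.

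There is, however, a genuine error in your sketch of ampleness. The claim that ``$\Lambda$-submodules of $\Lambda$-projectives that lie inside the radical are automatically $\Gamma$-stable'' is false. Take $\mathcal{O}$ with uniformizer $\pi$, $\Gamma=M_2(\mathcal{O})$, and $\Lambda=\mathcal{O}\cdot I+\pi M_2(\mathcal{O})$; then ${\rm rad}(\Lambda)={\rm rad}(\Gamma)=\pi M_2(\mathcal{O})$. The left ideal $N=\Lambda\cdot\pi e_{11}=\pi\mathcal{O}e_{11}+\pi^2\mathcal{O}e_{21}$ sits inside ${\rm rad}(\Lambda)$, yet $e_{21}\cdot\pi e_{11}=\pi e_{21}\notin N$, so $N$ is not a $\Gamma$-submodule. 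Thus your mechanism for forcing syzygies into ${\rm add}(\Gamma)$ does not work as stated. The conclusion you want (that high syzygies of $\Lambda$-lattices lie in ${\rm add}(\Gamma)$, and that the stable endomorphism ring of $\Gamma$ is semisimple) is true, but it requires the finer analysis carried out in \cite[Propositions~4.1--4.3]{Wei}; you should either supply that argument or cite it, as the paper does. Your self-diagnosed ``main obstacle'' in the second step is comparatively minor: once the first step is in place, the bimodule $M$ is exactly $\underline{\rm Hom}_\Lambda(\Gamma,\Omega_\Lambda(\Gamma))$ by functoriality, and the identification of $\Omega_\Lambda$ with $M\otimes_D-$ is part of what \cite{Wei} establishes.
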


\begin{proof}
    Set $D=\underline{\rm End}_\Lambda(\Gamma)$ and $M=\underline{\rm Hom}_\Lambda(\Gamma, \Omega_\Lambda(\Gamma))$. By \cite[Proposition~4.2]{Wei}, $D$ is a finite dimensional semisimple $\mathbb{K}$-algebra. By \cite[Proposition~4.3]{Wei}, $M$ is naturally a $D$-$D$-bimodule.  We have an obvious  additive functor 
    $$H\colon D \mbox{-}{\rm mod}\longrightarrow  \underline{\rm CM}(\Lambda),$$
    sending $D$ to $\Gamma$. Here, we view $(D \mbox{-}{\rm mod}, M\otimes_D-)$ as a semisimple left triangulated category; see Example~\ref{exm:ss}.  By \cite[Proposition~4.1 and (4.3)]{Wei}, the functor $H$ is a part of a pre-triangle equivalence $(H, \delta)$.  By combining this pre-triangle equivalence with (\ref{tri:radical2}) and (\ref{tri:CM}), we obtain the required singular equivalence. 
\end{proof}

\section{Leavitt rings}

In this final section, we report a recent characterization \cite{CW25} of the singularity category of a certain artinian ring in terms of graded modules over Leavitt rings.

Let $R$ be an arbitrary unital ring, and $_RM_R$ an $R$-$R$-bimodule such that its underlying  left $R$-module $_RM$ is finitely generated projective. Then we have a looped category $(R\mbox{-mod}, M\otimes_R-)$, and form the stabilization $\mathcal{S}(R\mbox{-mod}, M\otimes_R-)$.

Denote by $M^*={\rm Hom}_R({_R}M, {_R}R)$ the \emph{left-dual bimodule}. Its $R$-$R$-bimodule structure is given such that
$$(af)(x)=f(xa) \mbox{ and } (fb)(x)=f(x)b$$
for any $a, b\in R$, $x\in M$ and $f\in M^*$. Recall that the \emph{Casimir element} $c\in M^*\otimes_R M$ is the image of ${\rm Id}_M$ under the canonical isomorphism ${\rm End}_R(M)\stackrel{\sim}\rightarrow M^*\otimes_R M$.

Recall from \cite[Definition~2.4]{CWKW} that the \emph{Leavitt ring} associated to the $R$-$R$-bimodule $M$ is defined to be
$$L_R(M)=T_R(M^*\oplus M)/{(x\otimes_R f-f(x), c-1_R \; |\; x\in M, f\in M^*)}.$$
Here, $T_R(M^* \oplus M)$ denotes the tensor ring of the bimodule $M^*\oplus M$. The Leavitt ring is $\mathbb{Z}$-graded such that ${\rm deg}(x)=-1$ and ${\rm deg}(f)=1$ for $x\in M$ and $f\in M^*$. We mention that Leavitt rings are certain Cuntz-Primsner rings in the sense of \cite[Definition~3.16]{CO}.

\begin{exm}
    Let $\mathbb{K}$ be a field and $Q$ be a finite quiver. Denote by $Q^\circ$ the quiver without sinks, which is obtained from $Q$ by repeatedly removing all possible sinks. Then  by \cite[Proposition~4.1(2)]{CWKW} the Leavitt ring $L_{\mathbb{K}Q_0}(\mathbb{K}Q_1)$ is isomorphic to the \emph{Leavitt path algebra} $L(Q^\circ)$ in the sense of \cite{AAP, AMP}. We mention that Leavitt path algebras are natural generalizations of Leavitt algebras in \cite{Leav}. 
\end{exm}

For any $\mathbb{Z}$-graded ring $\Gamma$, we denote by $\Gamma\mbox{-grmod}$ the category of finitely presented graded $\Gamma$-modules, and by $\Gamma\mbox{-grproj}$ the full subcategory formed by finitely generated graded projective $\Gamma$-modules.  We have the stable category  $\Gamma\mbox{-\underline{grmod}}$.  The following result  is due to \cite[Theorem~4.4]{CW25}, which connects Leavitt rings with   stabilizations of module categories.

\begin{thm}\label{thm:s-gr}
Let $R$ be a ring and $M$ be an $R$-$R$-bimodule such that $_RM$ is finitely generated projective. Then we have an equivalence
$$\mathcal{S}(R\mbox{-{\rm mod}}, M\otimes_R-)\simeq L_R(M)\mbox{-{\rm grmod}},$$
which induces an equivalence
$$\mathcal{S}(R\mbox{-{\rm \underline{mod}}}, M\otimes_R-)\simeq L_R(M)\mbox{-}{\rm \underline{grmod}}.$$
\end{thm}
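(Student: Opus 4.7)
My plan is to apply the universal property of the stabilization (Proposition~\ref{prop:univ}) and then verify the resulting functor is an equivalence using Proposition~\ref{prop:equiv}. Let $S$ denote the degree-shift automorphism $S(Y)_k = Y_{k-1}$ on $L_R(M)\mbox{-grmod}$, so that $(L_R(M)\mbox{-grmod}, S)$ is strictly stable. Define $H\colon R\mbox{-mod}\to L_R(M)\mbox{-grmod}$ by $H(X) = L_R(M)\otimes_R X$ with $X$ placed in degree zero, and equip it with the natural iso
\[\sigma_X\colon H(M\otimes_R X) \longrightarrow S(H(X)),\qquad r\otimes x\otimes v\longmapsto rx\otimes v,\]
coming from the multiplication $L_R(M)\otimes_R M\to L_R(M)$ (which lowers degree by one since $M\subseteq L_R(M)_{-1}$). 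Using a dual basis $\{x_i,f_i\}$ of ${}_RM$, the map $r\otimes v\mapsto \sum_i rf_i\otimes x_i\otimes v$ is an inverse to $\sigma_X$: one composition is the identity by the Casimir relation $\sum_i f_i x_i = 1_R$, and the other by the evaluation relation $xf = f(x)$ combined with the dual basis identity $x = \sum_i f_i(x)x_i$. Proposition~\ref{prop:univ} then supplies a strictly looped functor $\tilde H\colon (\mathcal{S}(R\mbox{-mod},M\otimes_R -), \Sigma^{-1})\to (L_R(M)\mbox{-grmod},S)$.

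The decisive structural observation for the equivalence is that the two Leavitt relations imply the filtered-colimit description
\[(L_R(M))_0 \;\cong\; \operatorname*{colim}_k (M^*)^{\otimes k}\otimes_R M^{\otimes k} \;\cong\; \operatorname*{colim}_k \operatorname{End}_R(M^{\otimes k}),\]
where the transition $k\to k+1$ inserts the Casimir $c=\sum_i f_i\otimes x_i$ between the $(M^*)^{\otimes k}$ and $M^{\otimes k}$ factors. Combining this with the adjunction isomorphism $\operatorname{Hom}_{\mathrm{gr}}(H(X), Y)\cong\operatorname{Hom}_R(X, Y_0)$ and the tensor-Hom identity $(M^*)^{\otimes k}\otimes_R -\cong\operatorname{Hom}_R(M^{\otimes k},-)$ (valid because ${}_RM$, hence ${}_RM^{\otimes k}$, is finitely generated projective) yields, for finitely presented $X$ and $Y$,
\[\operatorname{Hom}_{\mathrm{gr}}(H(X),H(Y)) \;=\; \operatorname*{colim}_k \operatorname{Hom}_R(M^{\otimes k}\otimes_R X,\; M^{\otimes k}\otimes_R Y),\]
which is exactly $\mathcal{S}((X,0),(Y,0))$. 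Together with Remark~\ref{rem:enlarge} this proves $\tilde H$ is full and faithful in the sense of Proposition~\ref{prop:equiv}(1) and (2).

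For density, any finitely presented graded $Y$ admits a presentation $\bigoplus L_R(M)[n_j]\to\bigoplus L_R(M)[m_i]\to Y\to 0$; after applying $S^N$ with $N\geq -\min\{n_j,m_i\}$, all shifts become non-negative, and each $L_R(M)[k]$ with $k\geq 0$ equals $H(M^{\otimes k})$ via $\sigma^{(k)}$. The previous display then represents the presenting map, after passing to some further shift $S^k$, by an $R$-linear map $\tilde\alpha$; right-exactness of $H$ gives $S^{N+k}Y \cong H(\operatorname{coker}\tilde\alpha)$, completing density by Proposition~\ref{prop:equiv}(3). The induced equivalence on stable categories follows because $H(R) = L_R(M)$ is the free graded module of rank one, so $H$ sends projective $R$-modules to graded-projective $L_R(M)$-modules. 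The main obstacle is the density step: it relies on the filtered-colimit presentation of $(L_R(M))_0$, which is where the specific form of the two Leavitt relations is genuinely used, rather than merely the abstract $(M\otimes_R -, M^*\otimes_R -)$ adjunction.
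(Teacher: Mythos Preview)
The paper does not give its own proof of Theorem~\ref{thm:s-gr}; it merely cites \cite[Theorem~4.4]{CW25}. So there is no in-paper argument to compare yours against. Remark~\ref{rem:L}, which points to \cite[Proposition~2.2]{CW25} for the strictly looped structure, indicates that the original proof also proceeds via the universal property of the stabilization, so your overall strategy is almost certainly the intended one.

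That said, two steps in your sketch carry real content and are not yet justified.

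First, the filtered-colimit description $L_R(M)_0\cong \operatorname*{colim}_k\operatorname{End}_R(M^{\otimes k})$ is asserted, not proved. The relation $xf=f(x)$ gives a surjection from $\bigoplus_k (M^*)^{\otimes k}\otimes_R M^{\otimes k}$ onto $L_R(M)_0$, and the Casimir relation makes this factor through the colimit; but injectivity---that no further collapsing occurs in the quotient---requires an argument (for instance via a faithful representation on a Fock-type module $\bigoplus_{k\ge 0}M^{\otimes k}$, or a careful normal-form analysis). This is exactly the point where, as you yourself flag, the Leavitt relations are ``genuinely used,'' and the whole Hom computation for fullness and faithfulness rests on it, so it should not be left as an observation.

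Second, the passage to stable categories is too brief. Knowing that $H$ sends $R\mbox{-proj}$ into $L_R(M)\mbox{-grproj}$ only shows that $\tilde H$ descends to a functor between the quotients; to conclude it is an equivalence you must also check the converse, namely that every finitely generated graded projective $L_R(M)$-module arises under $\tilde H$ from some $(P,n)$ with $P\in R\mbox{-proj}$. This holds because any such module is a summand of a finite direct sum of shifts of $L_R(M)=\tilde H(R,0)$, and an idempotent in $\mathcal{S}(R\mbox{-mod})$ on such an object can, after enlarging the second entry as in Remark~\ref{rem:enlarge}, be represented by an honest idempotent on a finitely generated projective $R$-module, which then splits. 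Equivalently, one should verify that $\mathcal{S}(R\mbox{-\underline{mod}},M\otimes_R-)$ is the quotient of $\mathcal{S}(R\mbox{-mod},M\otimes_R-)$ by the ideal of morphisms factoring through objects $(P,n)$ with $P$ projective, and that $\tilde H$ carries this ideal onto the ideal of morphisms factoring through $L_R(M)\mbox{-grproj}$.
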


\begin{rem}\label{rem:L}
Set $\mathcal{S}=\mathcal{S}(R\mbox{-{\rm mod}}, M\otimes_R-)$. Denote by $(1)$ the degree-shift automorphism on $L_R(M)\mbox{-{\rm grmod}}$.  The equivalence above is indeed a strictly looped functor between $(\mathcal{S}, \Sigma)$ and $(L_R(M)\mbox{-{\rm grmod}}, (1))$; see the proof of \cite[Proposition~2.2]{CW25}.
\end{rem}

Let $A$ be a left artinian ring with radical square zero. Denote by ${\bf r}={\rm rad}(A)$. Then ${\bf r}$ is naturally an $A/{\bf r}$-$A/{\bf r}$-bimodule.  By \cite[Lemma~3.8 and Proposition~4.3]{CW25} the Leavitt ring $L_{A/{\bf r}}({\bf r})$ is \emph{graded von Neumann regular}, that is, it is strongly graded whose zeroth component $L_{A/{\bf r}}({\bf r})_0$ is von Neumann regular. It follows that $L_{A/{\bf r}}({\bf r})\mbox{-}{\rm grproj}=L_{A/{\bf r}}({\bf r})\mbox{-}{\rm grmod}$, which is a semisimple abelian category. By \cite[Lemma~3.4]{Chen11},  we will view $L_{A/{\bf r}}({\bf r})\mbox{-}{\rm grproj}$ as a semisimple triangulated category with the suspension functor given by $(1)$. The following result slightly generalizes \cite[Theorem~5.9]{Smi}; compare \cite[Theorem~B]{Chen11} and \cite[Theorem~6.1]{CY}. 

\begin{thm}\label{thm:radical}
Let $A$ be a left artinian ring with radical square zero. Then there is a triangle equivalence
$$\mathbf{D}_{\rm sg}(A)\simeq L_{A/{\bf r}}({\bf r})\mbox{-}{\rm grproj}.$$
\end{thm}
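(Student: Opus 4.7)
The plan is to chain together three identifications that are already available in the excerpt, without having to do any new homological algebra. Throughout, write $\bar A = A/{\bf r}$, so $\bar A$ is a semisimple ring and ${\bf r}$ is naturally a $\bar A$-$\bar A$-bimodule. I would read the desired equivalence as the composition
$$\mathbf{D}_{\rm sg}(A)\; \simeq\; \mathcal{S}(\bar A\mbox{-}{\rm mod}, {\bf r}\otimes_{\bar A}-)\; \simeq\; L_{\bar A}({\bf r})\mbox{-}{\rm grmod}\; =\; L_{\bar A}({\bf r})\mbox{-}{\rm grproj}.$$

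The first equivalence is exactly the triangle equivalence~(\ref{tri:radical2}), where the stabilization of $(\bar A\mbox{-}{\rm mod}, {\bf r}\otimes_{\bar A}-)$ carries the triangulated structure from Proposition~\ref{prop:stab-tri} applied to the unique (semisimple) left triangulated structure on this looped category, as in Example~\ref{exm:ss}. The second equivalence is Theorem~\ref{thm:s-gr} applied with $R=\bar A$ and $M={\bf r}$; by Remark~\ref{rem:L}, this equivalence is a strictly looped functor identifying the suspension $\Sigma$ on the stabilization with the degree-shift $(1)$ on $L_{\bar A}({\bf r})\mbox{-}{\rm grmod}$. The final equality uses the observation, recorded right before the theorem, that $L_{\bar A}({\bf r})$ is graded von Neumann regular, so every finitely presented graded module is projective; hence the abelian category $L_{\bar A}({\bf r})\mbox{-}{\rm grmod}$ coincides with the semisimple additive category $L_{\bar A}({\bf r})\mbox{-}{\rm grproj}$, on which the paper stipulates a semisimple triangulated structure with suspension $(1)$.

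The one point that needs care, and which I see as the main (small) obstacle, is promoting the middle looped equivalence to a triangle equivalence. Here I would invoke semisimplicity twice: on the one hand, since $\bar A\mbox{-}{\rm mod}$ is semisimple, so is its stabilization, by Example~\ref{exm:ss}; on the other hand, $L_{\bar A}({\bf r})\mbox{-}{\rm grproj}$ is semisimple by construction. Example~\ref{exm:ss} tells us that a semisimple additive category equipped with an autoequivalence admits a unique triangulated structure, namely the one whose exact triangles are direct sums of trivial ones. Consequently, any looped equivalence between two semisimple strictly stable categories automatically preserves the triangulated structure, so the strictly looped equivalence of Remark~\ref{rem:L} is itself a triangle equivalence. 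Composing with~(\ref{tri:radical2}) then delivers the desired triangle equivalence, and no further verification of exact triangles is needed. The only bookkeeping is to line up the suspension conventions ($\Sigma$ versus $(1)$, and the fact that one begins with $(\bar A\mbox{-}{\rm mod}, {\bf r}\otimes_{\bar A}-)$ rather than its inverse), which is handled by Remark~\ref{rem:L}.
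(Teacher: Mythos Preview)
Your proposal is correct and follows essentially the same route as the paper's own proof: combine the triangle equivalence~(\ref{tri:radical2}) with Theorem~\ref{thm:s-gr} and the identification $L_{A/{\bf r}}({\bf r})\mbox{-}{\rm grproj}=L_{A/{\bf r}}({\bf r})\mbox{-}{\rm grmod}$, then invoke Remark~\ref{rem:L} and the uniqueness of the triangulated structure on a semisimple category with prescribed suspension to upgrade the looped equivalence to a triangle equivalence. You have merely made explicit what the paper compresses into two sentences.
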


\begin{proof}
 Recall that $L_{A/{\bf r}}({\bf r})\mbox{-}{\rm grproj}=L_{A/{\bf r}}({\bf r})\mbox{-}{\rm grmod}$. Now the required equivalence follows from (\ref{tri:radical2}) and Theorem~\ref{thm:s-gr}. Here, we use Remark~\ref{rem:L} and implicitly use the uniqueness of the triangulated structure on any semisimple category with a given suspension functor.
\end{proof}

In what follows,  we fix a ring $\Lambda$, which contains a semisimple subring $E$ such that the left $E$-module $_E\Lambda$ is finitely generated. It follows that $\Lambda$ is left artinian. For example, if $\Lambda$ is a finite dimensional $\mathbb{K}$-algebra, we can take $E$ to be  $\mathbb{K}$.

Consider the quotient $E$-$E$-bimodule $\overline{\Lambda}=\Lambda/E$. An element $a\in \Lambda$ corresponds to $\overline{a}\in \overline{\Lambda}$. The bimodule of \emph{$E$-relative noncommutative differential $1$-forms} \cite{CQ} is defined to be the following $\Lambda$-$\Lambda$-bimodule
$$\Omega_{\rm nc}=\Omega_{{\rm nc}, \Lambda/E}=\overline{\Lambda}\otimes_E \Lambda.$$
Its right $\Lambda$-action is given by $(\overline{a}\otimes_E x)b=\overline{a}\otimes_E xb$, while its left $\Lambda$-action is somewhat nontrivial and given by $b(\overline{a}\otimes_E x)=\overline{ba}\otimes_E x-\overline{b}\otimes_E ax$. By \cite[Proposition~2.5]{CQ}, $\Omega_{\rm nc}$ is projective on both sides.

Following \cite[Definition~2.1]{Dam}, a ring $R$ is calld \emph{FC} if it is two-sided coherent and  satisfies
$${\rm Ext}_R^1(M, R)=0={\rm Ext}^1_{R^{\rm op}}(N, R)$$
for any finitely presented left $R$-module $M$ and finitely presented right $R$-module $N$. This terminology is justified by the fact that flat modules and coflat modules coincide for FC rings. In this case, $R\mbox{-mod}$ is a Frobenius abelian category; see \cite[Lemma~5.1]{CW25}. FC rings are coherent analogues of quasi-Frobenius rings. 

A $\mathbb{Z}$-graded ring $\Gamma$ is called \emph{graded FC}, if it is strongly graded and its zeroth component $\Gamma_0$ is an FC ring. In this situation, the category $\Gamma\mbox{-grmod}$ is Frobenius abelian, and thus its stable category $\Gamma\mbox{-}\underline{\rm grmod}$ is naturally triangulated.  The following result is due to \cite[Theorem~5.7]{CW25}, which realizes the singularity category as a stable module category.

\begin{thm}\label{thm:Leavitt}
    Set $\Omega_{\rm nc}=\Omega_{{\rm nc},\Lambda/E}$. Then the Leavitt ring $L_\Lambda(\Omega_{\rm nc})$ is graded FC. Moreover, we have a triangle equivalence
    $$\mathbf{D}_{\rm sg}(\Lambda)\simeq L_\Lambda(\Omega_{\rm nc})\mbox{-\underline{\rm grmod}}.$$
\end{thm}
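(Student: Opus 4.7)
The approach is to bridge Theorem~\ref{thm:BKVB} with Theorem~\ref{thm:s-gr} by replacing the intrinsic syzygy endofunctor $\Omega_\Lambda$ on $\Lambda\mbox{-}\underline{\rm mod}$ by the bimodule tensor functor $\Omega_{\rm nc}\otimes_\Lambda -$. Once this identification is in place, the triangle equivalence follows by routine assembly, and the graded-FC assertion is treated as a separate structural statement about the Leavitt ring itself.

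First I would record the short exact sequence of $\Lambda$-$\Lambda$-bimodules
\[
0\longrightarrow \Omega_{\rm nc}\longrightarrow \Lambda\otimes_E\Lambda \xrightarrow{\;\mathrm{mult}\;} \Lambda \longrightarrow 0
\]
that defines $\Omega_{\rm nc}$. Since $E$ is semisimple and $_E\Lambda$ is finitely generated, for any finitely presented left $\Lambda$-module $M$ the module $\Lambda\otimes_E M$ is finitely generated projective over $\Lambda$, and tensoring the displayed sequence with $M$ remains exact. Thus $\Omega_{\rm nc}\otimes_\Lambda M$ is a syzygy of $M$ in $\Lambda\mbox{-}{\rm mod}$, and a standard comparison (Schanuel plus uniqueness up to projective summands) upgrades this object-wise statement to a natural isomorphism
\[
\Omega_{\rm nc}\otimes_\Lambda -\;\simeq\;\Omega_\Lambda\qquad \text{of endofunctors on }\Lambda\mbox{-}\underline{\rm mod}.
\]
The same splitting also identifies $_\Lambda\Omega_{\rm nc}$ as finitely generated projective, so the hypothesis of Theorem~\ref{thm:s-gr} is satisfied.

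Now since $\Lambda$ is left artinian hence left coherent with $\Lambda\mbox{-}{\rm mod}^{\rm fr}=\Lambda\mbox{-}{\rm mod}$, Theorem~\ref{thm:BKVB} gives $\mathbf{D}_{\rm sg}(\Lambda)\simeq \mathcal{S}(\Lambda\mbox{-}\underline{\rm mod},\Omega_\Lambda)$. Combining this with the isomorphism of looped categories above and then invoking Theorem~\ref{thm:s-gr} yields
\[
\mathbf{D}_{\rm sg}(\Lambda)\;\simeq\;\mathcal{S}(\Lambda\mbox{-}\underline{\rm mod},\Omega_{\rm nc}\otimes_\Lambda-)\;\simeq\;L_\Lambda(\Omega_{\rm nc})\mbox{-}\underline{\rm grmod}.
\]
Compatibility with the triangulated structures is supplied by Remark~\ref{rem:L}, which guarantees that the equivalence of Theorem~\ref{thm:s-gr} intertwines the suspension $\Sigma$ on the stabilization with the degree-shift $(1)$ on graded modules.

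The remaining assertion, that $L_\Lambda(\Omega_{\rm nc})$ is graded FC, is where I expect the real work to concentrate, and is the main obstacle of the proof. Strong grading should follow from the existence of Casimir-type identities for $\Omega_{\rm nc}$ on both sides, which in turn rest on the two-sided finite projectivity of $\Omega_{\rm nc}$ established in \cite{CQ}: the left Casimir gives $1\in L_\Lambda(\Omega_{\rm nc})_1\cdot L_\Lambda(\Omega_{\rm nc})_{-1}$, and the dual construction from the right-projectivity side gives the reverse product. The delicate half is the FC property of the zeroth component $L_\Lambda(\Omega_{\rm nc})_0$. My plan is to present this ring as a directed colimit of endomorphism rings of iterated tensor powers $\Omega_{\rm nc}^{\otimes n}$, each of which inherits finite projectivity on both sides, and to deduce two-sided coherence and the ${\rm Ext}^1$-vanishing against the regular module from the relative Frobenius structure on $\Lambda\mbox{-}{\rm mod}$ induced by the semisimple subring $E$. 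This ${\rm Ext}^1$-vanishing verification, rather than any part of the stabilization argument, is where the detailed bookkeeping would live.
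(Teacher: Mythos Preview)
The paper does not actually prove Theorem~\ref{thm:Leavitt}; it merely records the statement and attributes it to \cite[Theorem~5.7]{CW25}. Your outline---identify $\Omega_{\rm nc}\otimes_\Lambda-$ with the intrinsic syzygy $\Omega_\Lambda$ on $\Lambda\mbox{-}\underline{\rm mod}$ via the Cuntz--Quillen sequence, then concatenate Theorem~\ref{thm:BKVB} with Theorem~\ref{thm:s-gr}---is precisely the route the surrounding text invites, and is almost certainly the skeleton of the argument in \cite{CW25}. You have also correctly located the real labour in the graded-FC verification.

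One point deserves more care than you give it. Remark~\ref{rem:L} only asserts that the equivalence of Theorem~\ref{thm:s-gr} is \emph{looped}, intertwining $\Sigma$ with the degree shift $(1)$; it does not claim compatibility with exact triangles. On the stabilization side the triangulated structure comes from $\mathcal{E}_\Lambda$ via Proposition~\ref{prop:stab-tri}, whereas on $L_\Lambda(\Omega_{\rm nc})\mbox{-}\underline{\rm grmod}$ it comes from the Frobenius abelian structure furnished by the graded-FC property. Matching these two is not formal: compare the proof of Theorem~\ref{thm:radical}, where the paper explicitly invokes the \emph{uniqueness} of the triangulated structure on a semisimple category with prescribed suspension to bridge the gap. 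That escape route is unavailable here, so you would need to verify directly that canonical left-triangles arising from short exact sequences in $\Lambda\mbox{-}{\rm mod}$ are carried to triangles of the Frobenius structure. This is presumably done in \cite{CW25}, but it is a genuine step, not a consequence of Remark~\ref{rem:L} alone.
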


The left artinian ring $\Lambda$ has finite global dimension if and only if the  ring $L_\Lambda(\Omega_{\rm nc})$ is graded von Neumann regular. In a certain sense,  the non-regularity of $L_\Lambda(\Omega_{\rm nc})$ detects the homological singularity of $\Lambda$.

\vskip 5pt

\noindent{\bf Acknowledgements}. \quad  We thank Hongxing Chen,  Xiaofa Chen, Henning Krause, Jian Liu and Hongrui Wei for  helpful discussions and suggestions.  This project is supported by National Key R$\&$D Program of China (No.s 2024YFA1013801), and the National Natural Science Foundation of China (No.s 12325101, and 12131015).

\bibliography{}

\vskip 10pt

 {\footnotesize \noindent  Xiao-Wu Chen\\
 School of Mathematical Sciences, University of Science and Technology of China, Hefei 230026, Anhui, PR China}

\end{document}